\def\Id{\mathop{\rm Id}\nolimits}
\def\Ad{\mathop{\rm Ad}\nolimits}
\def\cotor{\mathop{\rm Cotor}\nolimits}
\def\Hom{\mathop{\rm Hom}\nolimits}
\def\Rb{{\mathbb R}}
\def\Zb{{\mathbb Z}}
\def\Ac{{\mathcal A}}
\def\Hc{{\mathcal H}}
\def\Kc{{\mathcal K}}
\def\Cc{{\mathcal C}}
\def\Zc{{\mathcal Z}}
\def\Oc{{\mathcal O}}
\def\Dc{{\mathcal D}}
\def\a{\alpha}
\def\d{\delta}
\def\D{\Delta}
\def\s{\sigma}
\def\ve{\varepsilon}
\def\vp{\varphi}
\def\0b{\bf 0}
\def\nb{\nabla}
\def\ot{\otimes}
\def\ra{\rightarrow}
\def\rt{\triangleright}
\def\lt{\triangleleft}
\def\p{\partial}
\def\0D{\Delta^{(0)}}
\def\1D{\Delta^{(1)}}
\newcommand{\wbar}[1]{\overline{#1}}
\newcommand{\Fb}{\mathfrak{b}}
\newcommand{\Fg}{\mathfrak{g}}
\newcommand{\Fh}{\mathfrak{h}}
\def\build#1_#2^#3{\mathrel{
\mathop{\kern 0pt#1}\limits_{#2}^{#3}}}
\newcommand{\ps}[1]{~\hspace{-4pt}_{^{(#1)}}}
\newcommand{\ns}[1]{~\hspace{-4pt}_{_{{<#1>}}}}
\newcommand{\snb}[1]{~\hspace{-4pt}^{^{{[#1]}}}}
\def\odots{\ot\cdots\ot}
\def\wdots{\wedge\dots\wedge}
\def\one{{\bf 1}}
\newcommand{\ie}{{\it i.e.\/}\ }
\def\a{\alpha}
\def\d{\delta}
\def\s{\sigma}
\def\ve{\varepsilon}
\def\vp{\varphi}
\def\D{\Delta}
\def\dt{\left.\frac{d}{dt}\right|_{_{t=0}}}
\def\nb{\nabla}
\def\ot{\otimes}
\def\part{\partial}
\def\ra{\rightarrow}
\def\lra{\leftrightarrow}
\def\text{\hbox}
\def\nb{\nabla}
\def\ot{\otimes}
\def\ra{\rightarrow}
\def\Ad{\mathop{\rm Ad}\nolimits}
\def\Ext{\mathop{\rm Ext}\nolimits}
\def\Hom{\mathop{\rm Hom}\nolimits}
\def\Id{\mathop{\rm Id}\nolimits}
\def\exp{\mathop{\rm exp}\nolimits}
\def\Tor{\mathop{\rm Tor}\nolimits}
\def\lra{\longrightarrow}
\def\build#1_#2^#3{\mathrel{
\mathop{\kern 0pt#1}\limits_{#2}^{#3}}}
\newcommand{\CH}{\text{\bf CH}}
\newcommand{\CB}{\text{\bf CB}}
\newcommand{\C}[1]{\mathcal{#1}}
\newcommand{\B}[1]{\mathbb{#1}}
\renewcommand{\leq}{\leqslant}
\renewcommand{\geq}{\geqslant}
\numberwithin{equation}{section}
\newtheorem{theorem}{Theorem}[section]
\newtheorem{proposition}[theorem]{Proposition}
\newtheorem{lemma}[theorem]{Lemma}
\newtheorem{corollary}[theorem]{Corollary}
\theoremstyle{definition}
\newtheorem{remark}[theorem]{Remark}
\newtheorem{definition}[theorem]{Definition}
\title{A characteristic map for compact quantum groups}
\author{Atabey Kaygun} 
\address{Istanbul Technical University, Department of Mathematics, Istanbul, Turkey}
\email{atabey.kaygun@gmail.com}
\author{Serkan Sütlü}
\address{Işık University, Department of Mathematics, Istanbul, Turkey}
\email{serkan.sutlu@isikun.edu.tr}
\begin{document}
\maketitle


\begin{abstract}
  We show that if $G$ is a compact Lie group and $\Fg$ is its Lie
  algebra, then there is a map from the Hopf-cyclic cohomology of the
  quantum enveloping algebra $U_q(\Fg)$ to the twisted cyclic
  cohomology of quantum group algebra $\Oc(G_q)$.  We also show that
  the Schmüdgen-Wagner index cocycle associated with the volume form
  of the differential calculus on the standard Podleś sphere
  $\Oc(S^2_q)$ is in the image of this map.
\end{abstract}

\section*{Introduction}

Given a compact Lie group $G$ and its Lie algebra $\mathfrak{g}$,
there is a characteristic map of the form
$HP^*(U(\mathfrak{g}),k_\d) \lra HP^*(\C{O}(G))$ coming from the
Connes-Moscovici theory~\cite{ConnMosc98}.  Here, the domain of the
map depends on the Lie algebra homology of $\mathfrak{g}$, and the
range is the ordinary periodic algebra cyclic cohomology of the
algebra of regular functions on $G$, which depends on the de~Rham
homology of $G$.  We refer the reader to
Subsection~\ref{ClassicalVanEst} for details.  In this paper we
develop a $q$-analogue of this map.  To be precise, in
Theorem~\ref{MainResult1} we show that for a compact quantum group
algebra $\mathcal{O}(G_q)$ and its quantum enveloping algebra
$U_q(\mathfrak{g})$ there is a morphism in cohomology of the form
\begin{equation}\label{aux-q-vanest}
HC^\ast(U_q(\mathfrak{g}),{}^\sigma k) \lra
   HC^\ast_{\sigma^{-1}}(\mathcal{O}(G_q))
\end{equation}
whose domain is the Hopf-cyclic cohomology of $U_q(\mathfrak{g})$ with
coefficients in the modular pair in involution (MPI) determined by
\cite[Prop. 6.1.6]{KlimSchm-book}, and whose range is the twisted
cyclic cohomology of $\mathcal{O}(G_q)$ viewed as an algebra.  Recall
that the Connes-Moscovici characteristic map can be viewed as a cup
product~\cite{KhalRang05-II,Rang08, Kayg08, Kayg11}.  Our key
observation is that when we write the analogous cup product using the
Haar functional of a compact quantum group, the modularity property of
the Haar functional~\cite[Prop. 11.34]{KlimSchm-book} gives us the
twisted algebra cyclic cohomology in the range in~\eqref{aux-q-vanest}
in contrast to the Connes-Moscovici case where the range is the
ordinary algebra cyclic cohomology.  We further observe that one can
\emph{untwist} the cohomology with an appropriate additional cup
product, but this procedure brings in a degree shift.  The shift
coming from the untwisting cup product explains the dimension drop
phenomenon observed in \cite{FengTsyg91}, and the degree shift
phenomenon observed in~\cite{GoodKrah14}.  We refer the reader to
Subsection~\ref{Untwisting} and Corollary~\ref{MainResult2}.

We show the non-triviality of the characteristic homomorphism
\eqref{aux-q-vanest} in Section~\ref{subsect-van-est}.  We first
recall that in \cite{MasuNakaWata90} Masuda, Nakagami and Watanabe
calculated the classical Hochschild and cyclic cohomology of
$\mathcal{O}(SL_q(2))$ using an explicit resolution.  Then, in
Proposition~\ref{MNW1} and Corollary~\ref{MNW2} we recover one
specific generator of the cyclic cohomology of $\Oc(SL_q(2))$ in the
image of the characteristic homomorphism \eqref{aux-q-vanest}.

In analogy with the fact that Connes-Moscovici characteristic map
allows the index computation of codimension-$n$ foliations to take
place in the Hopf-cyclic cohomology of the Hopf algebra $\Hc_n$ of
codimension-$n$ foliations, we introduce \eqref{aux-q-vanest}
to pull the index computation on the twisted cyclic cohomology of
$\Oc(G_q)$ to the Hopf-cyclic cohomology
$HC^\ast(U_q(\mathfrak{g}),{}^\sigma k)$ of the Hopf algebra of
$U_q(\mathfrak{g})$, which was computed in \cite{KaygSutl14}. This
fact, along with the quantum homogeneous space version of our
characteristic map we develop in
Section~\ref{quantumHomogeneousSpaces}, turns the Hopf-cyclic
cohomology of quantum groups into a useful tool detecting the index
cocycles of such spaces.  We use the equivariant characteristic map of
\cite{RangSutl-IV} to show that in the case of the standard Podleś
sphere, the characteristic homomorphism \eqref{aux-q-vanest} descends
to
\begin{equation}\label{aux-q-vanest-SU}
HC^\ast_{k[\sigma,\sigma^{-1}]}(U_q(su_2), {}^{\sigma^{-1}} k, {}^{\sigma} k) \lra HC^\ast_{\sigma^{-1}}(\mathcal{O}(S_q^2)),
\end{equation}
where the domain is now the equivariant Hopf-cyclic cohomology of the
quantum enveloping algebra $U_q(su_2)$.  Moreover, we show that a
$\s^{-1}$-twisted version of the Schmüdgen-Wagner quantum index
cocycle of \cite{SchmWagn04}, see also \cite{Hadf07}, that computes
the index of the Dirac operator on $\mathcal{O}(SU_q(2))$ is in the
image of \eqref{aux-q-vanest-SU}.  In the particular case of the
(standard) Podleś sphere $\Oc(S_q^2)\subseteq \Oc(SU_q(2))$, we
further realize the Schmüdgen-Wagner index cocycle in the
equivariant Hopf-cyclic cohomology \cite{RangSutl-IV} of $U_q(su_2)$.

\subsection*{Notation and conventions}

We use a base field $k$ of characteristic 0.  WLOG one can assume
$k=\B{R}$.  We are going to use $\CB$ and $\CH$ to denote respectively
the bar and the Hochschild complexes associated with a (co)cyclic
module.  In the same vein, we use $HH$, $HC$ and $HP$ to denote
respectively the Hochschild, the cyclic and the periodic cyclic
(co)homology of a (co)cyclic module.  We are going to use $\cotor_C^*$
to denote the right derived functor of the (left exact) monoidal
product $\Box_C$ in the category of $C$-comodules of a coassociative
counital coalgebra $C$.  A coextension $\pi\colon C\to D$ is an
epimorphism of (counital) coalgebras.

\subsection*{Acknowledgments}

We would like to thank the anonymous referee whose careful reading and
numerous suggestions greatly improved mathematical content and
exposition of the article.  We are grateful to the referee for
alerting us about the existence of the unpublished note~\cite{KRT1}.

\section{Preliminaries}

In this section we recall the basic material that will be needed in
the sequel. 

\subsection{Cobar and Hochschild complexes}
In this subsection we recall the definition of the cobar complex of a coalgebra $\Cc$, as well as the $\cotor$-groups associated to a coalgebra $\Cc$ and a pair 
$(V,W)$ of $\Cc$-comodules of opposite parity.

Let $\Cc$ be a coassociative coalgebra. Following \cite{BrzeWisb-book,Doi81} and \cite{KaygKhal06}, the cobar complex of
$\Cc$ is defined to be the differential graded space
\begin{equation*}
{\CB}^\ast(\Cc) := \bigoplus_{n\geq0}\Cc^{\ot n+2}
\end{equation*}
with the differentials $d\colon {\CB}^n(\Cc) \longrightarrow {\CB}^{n+1}(\Cc)$
\begin{align*}
d(c^0\odots c^{n+1})=\sum_{j=0}^{n}(-1)^j\,c^0\odots \D(c^j)\odots c^{n+1}.
\end{align*}

Let $\Cc^e:= \Cc \ot \Cc^{\rm cop}$ be the enveloping coalgebra of $\Cc$. In case $\Cc$ is counital, the cobar complex ${\CB}^\ast(\Cc)$ yields a $\Cc^e$-injective resolution of the (left) $\Cc^e$-comodule $\Cc$, see for instance \cite{Doi81}.

Following the terminology of \cite{Kayg12}, for a pair $(V,W)$ of two $\Cc$-comodules of opposite parity (say, $V$ is a right $\Cc$-comodule and $W$ is a left $\Cc$-comodule,) we call the complex
\begin{equation*}
\left({\CB}^\ast(V,\,\Cc,\,W),\,d\right), \quad {\CB}^\ast(V,\,\Cc,\,W):=V\Box_\Cc {\CB}^\ast(\Cc)\Box_\Cc W
\end{equation*}
where we define $d\colon {\CB}^n(V,\,\Cc,\,W)\lra {\CB}^{n+1}(V,\,\Cc,\,W)$
\begin{align}\label{aux-cobar}
d(v \ot c^1 \ot \ldots\ot c^n \ot w) 
= &  v\ns{0}\ot v\ns{1}\ot c^1\odots c^{n}\ot w \nonumber\\
  & + \sum_{j=1}^{n}(-1)^j\,c^1\odots \D(c^j)\odots c^n \ot w \\
  & + (-1)^{n+1}\,v\ot c^1\odots c^{n}\ot w\ns{-1}\ot w\ns{0}\nonumber
\end{align}
the two-sided (cohomological) cobar complex of the coalgebra $\Cc$.
In case $\Cc$ is a counital coalgebra, the $\cotor$-groups of a pair
$(V,W)$ of $\Cc$-comodules of opposite parity can be computed from
\begin{equation}\label{aux-cotor-by-cobar}
\cotor_\Cc^\ast(V,W)=H^\ast({\CB}^\ast(V,\,\Cc,\,W),\,d).
\end{equation}

We next recall the Hochschild cohomology of a coalgebra $\Cc$ with coefficients in the $\Cc$-bicomodule (equivalently $\Cc^e$-comodule) $V$, from \cite{Doi81}, as the cohomology of the complex
\begin{equation*}
\CH^\ast(\Cc,V) = \bigoplus_{n\geq 0} \CH^n(\Cc,V),\qquad\CH^n(\Cc,V):=V\ot \Cc^{\ot \,n}
\end{equation*}
with the differential  $b\colon\CH^n(\Cc,V) \lra \CH^{n+1}(\Cc,V)$ defined as
\begin{align}\label{aux-Hochschild-differential}
b(v\ot c^1\odots c^n) 
= & v\ns{0}\ot v\ns{1} \ot c^1 \odots c^n \nonumber\\
  & + \sum_{k= 1}^n(-1)^kc^1\odots \D(c^k)\odots c^n\\
  & + (-1)^{n+1} v\ns{0}\ot c^1\odots c^n\ot v\ns{-1}.\nonumber
\end{align}
We identify $\CB^n(\Cc)$ with $\Cc^e \ot \Cc^{\ot \,n}$ as left
$\Cc^e$-comodules for $n>0$ via
\begin{equation*}
c^0\odots c^{n+1} \mapsto (c^0\ot c^{n+1})\ot c^1\odots c^n.
\end{equation*}
The left $\Cc^e$-comodule structure on $\CB^n(\Cc) = \Cc^{\ot\,n+2}$
is given by
$$\nb(c^0\odots c^{n+1})=(c^0\ps{1}\ot c^{n+1}\ps{2})\ot(c^0\ps{2}\ot
c^1\odots c^n\ot c^{n+1}\ps{1}),$$
and on $\Cc^e\ot\Cc^{\ot\,n}$ by
$$\nb((c\ot c')\ot(c^1\odots c^n))=(c\ps{1}\ot c'\ps{2})\ot(c\ps{2}\ot
c'\ps{1})\ot(c^1\odots c^n).$$ This yields an isomorphism of the form
\begin{equation*}
\left(\CH^\ast(\Cc,V),\,b\right) \cong \left(V\Box_{\Cc^e}\CB^\ast(\Cc),\,d\right)
\end{equation*}
on the chain level. In case $\Cc$ is counital one can interpret the
Hochschild cohomology of $\Cc$ with coefficients in $V$ in terms of
$\cotor$-groups as
\begin{equation*}
H^\ast(\Cc,V) = H^\ast(\CH^\ast(\Cc,V),\,b) = \cotor^\ast_{\Cc^e}(V,\Cc),
\end{equation*}
or more generally,
\begin{equation*}
H^\ast(\Cc,V) = H^\ast(V\Box_{\Cc^e}Y^*)
\end{equation*}
for any coflat resolution $Y^*$ of $\Cc$ via left $\Cc^e$-comodules.

\subsection{Cohomology of coextensions}\label{subsect-cohom-coext}
In this subsection we recall the main computational tool introduced in
\cite{KaygSutl14} which can be summarized as follows: Given a coflat
coalgebra coextension $\Cc\lra \Dc$, the coalgebra Hochschild
cohomology of $\Cc$ can be computed relatively easily by means of the
Hochschild cohomology of $\Dc$, and the relative cohomology of the
coextension.

Let $\pi:\Cc\lra \Dc$ be a coextension. We first introduce the
auxiliary coalgebra $\Zc:= \Cc\oplus \Dc$ with the comultiplication
\begin{equation*}
\Delta(y) = y_{(1)}\otimes y_{(2)} \quad\text{ and }\quad
   \Delta(x) = x_{(1)}\otimes x_{(2)} + \pi(x_{(1)})\otimes x_{(2)} + x_{(1)}\otimes \pi(x_{(2)}),
\end{equation*}
and the counit
\begin{equation*}
\ve(x+y) = \ve(y),
\end{equation*}
for any $x\in \Cc$ and $y\in \Dc$.

Next, let $V$ be a $\Cc$-bicomodule and let $\Cc$ be coflat both as a
left and a right $\Dc$-comodule. Then via the short exact sequence
\begin{equation*}
  0 \to \Dc \xrightarrow{\ i\ } \Zc \xrightarrow{\ p\ } \Cc \to 0
  \qquad \text{ where }
  \qquad i: y\mapsto (0,y)
  \qquad p: (x,y)\mapsto x
\end{equation*}
of coalgebras and \cite[Lemma 4.10]{FariSolo00}, we have
\begin{equation}
HH^n(\Zc,V)\cong HH^n(\Cc,V), \qquad n\geq 0.
\end{equation}

We next consider $\CH^\ast(\Zc,V)$ with the decreasing filtration
\begin{equation*}
F^{n+p}_p =  \left\{ \begin{array}{ll}
          \bigoplus_{n_0+\cdots+n_p=n} V\otimes \Zc^{\otimes n_0}\otimes \Cc\otimes\cdots\otimes \Zc^{n_{p-1}}\otimes \Cc\otimes \Zc^{\otimes n_p}, & p\geq 0 \\
          0, & p<0.
        \end{array}
\right.
\end{equation*}
The associated spectral sequence is
\begin{equation*}
E^{i,j}_0 = F^{i+j}_i / F^{i+j}_{i+1}
   = \bigoplus_{n_0+\cdots+n_i=j} V\otimes \Dc^{\otimes n_0}\otimes \Cc\otimes\cdots\otimes \Dc^{n_{i-1}}\otimes \Cc\otimes \Dc^{\otimes n_i},
\end{equation*}
and by the coflatness assumption, on the vertical direction it computes
\begin{equation*}
HH^j(\Dc,\Cc^{\Box_\Dc\,i}\,\Box_\Dc\,V).
\end{equation*}

Hence we have the following.

\begin{theorem}\label{thm-spec-seq}
Let $\pi:\Cc\lra \Dc$ be a coalgebra coextension and $V=V'\ot V''$ a $\Cc$-bicomodule such that the left $\Cc$-comodule structure is given by $V'$ and the right $\Cc$-comodule structure is given by $V''$. Let also $\Cc$ be coflat both as a left and a right $\Dc$-comodule. Then there is a spectral sequence whose $E_1$-term is of the form
\begin{equation*}
E_1^{i,j}= \cotor^j_\Dc(V'', \Cc^{\Box_\Dc\,i}\,\Box_\Dc\,V'),
\end{equation*}
converging to $HH^{i+j}(\Cc,V)$.
\end{theorem}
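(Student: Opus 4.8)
The plan is to realize the asserted spectral sequence as the one attached to the $\Cc$-degree filtration $F^\bullet_\bullet$ of the Hochschild complex $\CH^\ast(\Zc,V)$ of the auxiliary coalgebra $\Zc=\Cc\op\Dc$ introduced above, and then to read off its pages from the coflatness hypothesis. First I would make $V$ into a $\Zc$-bicomodule by declaring the right coaction to be $v\mapsto v\ns{0}\ot(v\ns{1}+\pi(v\ns{1}))$ and the left one $v\mapsto(v\ns{-1}+\pi(v\ns{-1}))\ot v\ns{0}$; corestricting these along $p\colon\Zc\to\Cc$ recovers the original $\Cc$-bicomodule structure of $V$. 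Coassociativity of this $\Zc$-coaction is precisely where the defining formula for $\Delta_\Zc$ on $\Cc$ is used: the summand it deliberately omits, $\pi(x\ps{1})\ot\pi(x\ps{2})$, is exactly the one produced by $\Delta_\Dc\circ\pi=(\pi\ot\pi)\circ\Delta_\Cc$ when one expands $\Delta_\Zc(v\ns{1}+\pi(v\ns{1}))$. With this structure, the short exact sequence $0\to\Dc\to\Zc\to\Cc\to0$ of coalgebras, the coflatness of $\Cc$ over $\Dc$, and \cite[Lemma 4.10]{FariSolo00} together identify $HH^\ast(\Zc,V)$ with $HH^\ast(\Cc,V)$, so it suffices to build a spectral sequence with the claimed $E_1$-page converging to $HH^{i+j}(\Zc,V)$.

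Next I would check that the Hochschild differential $b$ of $\Zc$ preserves $F^\bullet_\bullet$, which is purely combinatorial. Applied to a slot in $\Dc$, $\Delta_\Zc$ stays inside $\Dc\ot\Dc$; applied to a slot in $\Cc$ it lands in $\Cc\ot\Cc\op\Dc\ot\Cc\op\Cc\ot\Dc$, with no $\Dc\ot\Dc$ component; and each of the two $V$-coaction terms in $b$ inserts one new slot, lying either in $\Cc$ or in $\Dc$. In every case the number of tensor slots lying in the summand $\Cc\sbs\Zc$ does not decrease, whence $b(F^{n+p}_p)\sbsq F^{n+p+1}_p$. Since $F^m_p=0$ for $p>m$ and $F^m_0=\CH^m(\Zc,V)$, the filtration is finite in each total degree, so the associated spectral sequence converges to $HH^{i+j}(\Zc,V)\cong HH^{i+j}(\Cc,V)$.

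It then remains to identify the first two pages. By construction $E_0^{i,j}=F^{i+j}_i/F^{i+j}_{i+1}$ is the displayed sum of tensor words carrying exactly $i$ distinguished $\Cc$-slots and $j$ slots in $\Dc$, and $d_0$ is precisely the part of $b$ that leaves the $\Cc$-count unchanged: the full $\Delta_\Dc$ on the $\Dc$-slots, the corestricted coactions $\pi(v\ns{1})$ and $\pi(v\ns{-1})$ on $V$, and the two mixed terms $\pi(x\ps{1})\ot x\ps{2}+x\ps{1}\ot\pi(x\ps{2})$ on each distinguished $\Cc$-slot. Reading a word cyclically, $(E_0^{i,\bullet},d_0)$ is thus an iterated two-sided cobar complex of $\Dc$ in which the $i$ copies of $\Cc$ and the two halves $V',V''$ of $V$ occur as spectator comodules; composing these cobar blocks by means of the coflatness of $\Cc$ as a left and a right $\Dc$-comodule, its cohomology is $HH^j(\Dc,\Cc^{\Box_\Dc i}\Box_\Dc V)$. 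Finally, writing $V=V'\ot V''$ and recalling that for a $\Dc$-bicomodule whose left structure comes from the first factor and whose right structure from the second the Hochschild complex of $\Dc$ is nothing but the two-sided cobar complex, one obtains $HH^j(\Dc,\Cc^{\Box_\Dc i}\Box_\Dc V)\cong\cotor^j_\Dc(V'',\Cc^{\Box_\Dc i}\Box_\Dc V')$, which is the asserted $E_1$-page.

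The hard part will be the bookkeeping in the last step: verifying that the surviving part $d_0$ of $b$ really assembles into the iterated cobar differential of $\Dc$ with the spectator comodules cyclically in the right order, and that the coflatness of $\Cc$ over $\Dc$ genuinely licenses replacing the interspersed tensor blocks by the single cotensor factor $\Cc^{\Box_\Dc i}\Box_\Dc V$. The reduction to $\Zc$ in the first step is the one place where one must lean on \cite[Lemma 4.10]{FariSolo00}; everything else is formal.
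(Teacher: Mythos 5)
Your proposal follows essentially the same route as the paper, whose proof is precisely the construction laid out just before the theorem: the auxiliary coalgebra $\Zc=\Cc\oplus\Dc$, the identification $HH^\ast(\Zc,V)\cong HH^\ast(\Cc,V)$ via the short exact sequence $0\to\Dc\to\Zc\to\Cc\to 0$ and \cite[Lemma 4.10]{FariSolo00}, and the filtration $F^{n+p}_p$ of $\CH^\ast(\Zc,V)$ whose $E_1$-page is identified using the coflatness of $\Cc$ over $\Dc$. The points you elaborate --- the explicit $\Zc$-bicomodule structure on $V$ (pushforward along $x\mapsto x+\pi(x)$), the check that $b$ preserves the filtration, boundedness of the filtration for convergence, and the iterated two-sided cobar/coflatness collapse giving $\cotor^j_\Dc(V'',\Cc^{\Box_\Dc i}\Box_\Dc V')$ --- are exactly the details the paper leaves implicit, and your treatment of them is correct.
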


A convenient set-up as a test case for our machinery is a principal
coextension \cite{Schn90,BrzeHaja09}. We assume $\Hc$ is a Hopf
algebra with a bijective antipode, and $\Cc$ is a left $\Hc$-module
coalgebra. Since $\Hc^+ = \ker\ve$ is the augmentation ideal of $\Hc$,
if we define a quotient coalgebra by $\Dc:=\Cc/\Hc^{+}\Cc$ then by
\cite[Theorem II]{Schn90},
\begin{enumerate}
\item $\Cc$ is a projective left $\Hc$-module,
\item ${\rm can}:\Hc\ot \Cc\lra \Cc\Box_\Dc \Cc$, $h\ot c\mapsto h\cdot c\ps{1}\ot c\ps{2}$ is injective,
\end{enumerate}
if and only if
\begin{enumerate}
\item $\Cc$ is faithfully flat left (and right) $\Dc$-comodule,
\item ${\rm can}:\Hc\ot \Cc\lra \Cc\Box_\Dc \Cc$ is an isomorphism.
\end{enumerate}

\subsection{Cyclic cohomology of algebras}

We recall the cyclic cohomology of algebras \cite{Conn83,Conn85,Connes-book,Loday-book}. Let $\Ac$ be an algebra and $M$ an $\Ac$-bimodule. Then the Hochschild cohomology $HH(\Ac,M)$ of $\Ac$ with coefficients in $M$ is the homology of the complex
\begin{equation}
C(\Ac,M)=\bigoplus_{n\geq 0} C^n(\Ac,M),
\end{equation}
where $C^n(\Ac,M)$ is the space of all linear maps $\Ac^{\ot\,n}\lra M$ with the differential
\begin{align}
 b\vp(a_1,\ldots,a_{n+1}) 
= & a_1\cdot \vp(a_2,\ldots,a_{n+1})\nonumber\\
  & + \sum_{k=1}^n(-1)^k\vp(a_1,\ldots,a_ka_{k+1}\ldots,a_{n+1}) \\
  & + (-1)^{n+1}\vp(a_1,\ldots,a_n)\cdot a_{n+1}.\nonumber
\end{align}

The space $\Ac^\ast$ of all linear maps of the form $\Ac\to k$ is an
$\Ac$-bimodule via $a\cdot \vp\cdot b (c)= \vp(bca)$ defined for every
$\vp\in \Ac^\ast$, and $a,b,c\in\Ac$. Hence, the complex
$C(\Ac,\Ac^\ast)$ can be defined. If we identify
$\vp\in C^n(\Ac,\Ac^\ast)$ with
\begin{equation}\label{aux-identification}
\phi:\Ac^{\ot\,n+1}\lra k,\qquad \phi(a_0,\a_1,\ldots,a_n):=\vp(a_1,\ldots,a_n)(a_0),
\end{equation}
the coboundary map corresponds to
\begin{align}
b\phi(a_0,\ldots,a_{n+1}) 
= & \phi(a_0a_1,\ldots,a_{n+1})\nonumber\\
  & + \sum_{k=1}^n(-1)^k\vp(a_0,\ldots,a_ka_{k+1}\ldots,a_{n+1}) \\
  & + (-1)^{n+1}\vp(a_{n+1}a_0,\ldots,a_n).\nonumber
\end{align}
With these definitions at hand, we define the cyclic cohomology $H_\lambda(\Ac)$ of the algebra $\Ac$ as the homology of the subcomplex
\begin{equation}
C_\lambda^*(\Ac)=\bigoplus_{n\geq 0} C^n_\lambda(\Ac,\Ac^\ast),
\end{equation}
where
\begin{equation}
C^n_\lambda(\Ac,\Ac^\ast):=\{\phi\in C^n(\Ac,\Ac^\ast)\,|\, \phi(a_0,\ldots,a_n)=(-1)^n\phi(a_n,a_0,\ldots,a_{n-1})\}.
\end{equation}

Equivalently, the cyclic cohomology $HC(\Ac)$ of an algebra $\Ac$ can also be defined as the cyclic cohomology of the cocyclic module associated to
\begin{equation}
C^*(\Ac) = \bigoplus_{n\geq 0} C^n(\Ac),\qquad C^n(\Ac):=\Hom(\Ac^{\ot\,n+1},k),
\end{equation}
by its own cofaces, codegeneracies and cyclic group actions.  The
coface maps $d_k:C^n(\Ac)\lra C^{n+1}(\Ac)$ are defined for
$0\leq k\leq n+1$ as
\begin{align*}
 d_k\vp(a^0,\ldots, a^{n+1}) = 
  \begin{cases}
    \vp(a^0,\ldots, a^ka^{k+1}, \ldots, a^{n+1}) & \text{ if } 0\leq k\leq n,\\
    \vp(a^{n+1}a^0, a^1, \ldots, a^n) & \text{ if } k=n+1.
  \end{cases}
\end{align*}
The codegenerecy maps $s_j:C^n(\Ac)\lra C^{n-1}(\Ac)$ are defined for
$0\leq j\leq n-1$ as
\begin{align*}
s_j\vp(a^0,\ldots, a^{n-1}) = \vp(a^0,\ldots, a^j, 1, a^{j+1}, \ldots, a^{n+1}),
\end{align*}
and finally the cyclic operators $t_n:C^n(\Ac)\lra C^n(\Ac)$ as
\begin{align*}
t_n\vp(a^0,\ldots, a^n) = \vp(a^n, a^0,\ldots, a^{n-1}).
\end{align*}
The cyclic cohomology of $\mathcal{A}$ is defined to be the total
cohomology of the associated first quadrant bicomplex
$\left(CC(\Ac),b,B\right)$ where
\begin{equation*}
CC^{p,q}(\Ac):= \begin{cases}
                           C^{q-p}(\Ac) & \text{if}\,\,q\geq p \geq 0, \\
                           0 & \text{if}\,\, p>q,
                         \end{cases}
\end{equation*}
with the algebra Hochschild coboundary operator
$b: CC^{p,q}(\Ac)\lra CC^{p,q+1}(\Ac)$ which is given by
\begin{equation*}
b:=\sum_{i=0}^{q+1}(-1)^i d_i,
\end{equation*}
and the Connes boundary operator
$B: CC^{p,q}(\Ac)\lra CC^{p-1,q}(\Ac)$ which is defined as
\begin{equation*}
 B:=\left(\sum_{i=0}^{p}(-1)^{pi}t^{i}_{p}\right)(1+(-1)^pt_p) s_{p}.
\end{equation*}
We recall that $H_\lambda^*(\Ac)\cong HC^*(\Ac)$, since we assume
throughout that the ground field $k$ is of characteristic $0$.

\subsection{Twisted cyclic cohomology}\label{TwistedCyclic}

We next briefly recall from \cite{KustMurpTuse03} the
twisted cyclic cohomology of an algebra $\Ac$ by an
automorphism $\s:\Ac\lra \Ac$. Let $C^n(\Ac)$ be the set of all linear maps $\Ac^{\ot\,n+1}\lra k$. Then, the complex
\begin{equation}\label{twisted-complex}
C_\s^*(\Ac) = \bigoplus_{n\geq 0} C^n_\s(\Ac),
\end{equation}
where 
\begin{equation}
C^n_\s(\Ac) = \{\phi\in C^n(\Ac)\mid \phi(a_0,\ldots,a_n) = (-1)^n\phi(\s(a_n),a_0,\ldots,a_{n-1})\},
\end{equation}
is closed under the twisted Hochschild differential,
\begin{align}\label{twisted-Hoch}
b\phi(a_0,\ldots,a_{n+1}) 
= & \phi(a_0a_1,\ldots,a_{n+1})\nonumber\\
  & + \sum_{k=1}^n(-1)^k\vp(a_0,\ldots,a_ka_{k+1}\ldots,a_{n+1}) \\
  & + (-1)^{n+1}\vp(\s(a_{n+1})a_0,\ldots,a_n).\nonumber
\end{align}
Then the homology of the complex \eqref{twisted-complex} with the differential map \eqref{twisted-Hoch} is called the $\s$-twisted cyclic cohomology of the algebra $\Ac$.

Equivalently, the $\s$-twisted cyclic cohomology $HC_\s^*(\Ac)$ of the algebra $\Ac$ is computed by the cocyclic object 
\begin{equation}
C_\s^n(\Ac) = \{\phi\in C^n(\Ac)\mid \phi(\s(a_0),\ldots,\s(a_n)) = \phi(a_0,\ldots, a_n)\}
\end{equation}
given by the coface maps $d_k:C_\s^n(\Ac)\lra C_\s^{n+1}(\Ac)$ for $0\leq k\leq n+1$,
\begin{align*}
d_k\vp(a^0,\ldots, a^{n+1}) =
  \begin{cases}
    \vp(a^0,\ldots, a^ka^{k+1}, \ldots, a^{n+1}) & \text{ if } 0\leq k\leq n,\\
    \vp(\s(a^{n+1})a^0, a^1, \ldots, a^n) & \text{ if } k=n+1,
  \end{cases}
\end{align*}
the codegeneracy maps $s_j:C_\s^n(\Ac)\lra C_\s^{n-1}(\Ac)$
for $0\leq j\leq n-1$ as 
\begin{align*}
s_j\vp(a^0,\ldots, a^{n-1}) = \vp(a^0,\ldots, a^j, 1, a^{j+1}, \ldots, a^{n+1}),
\end{align*}
and finally the cyclic operators $t_n:C_\s^n(\Ac)\lra C_\s^n(\Ac)$
\begin{align*}
t_n\vp(a^0,\ldots, a^n) = \vp(\s(a^n), a^0,\ldots, a^{n-1}).
\end{align*}

\subsection{Hopf-cyclic cohomology}

In this subsection we recall the Hopf-cyclic cohomology for Hopf algebras from \cite[Sect. 3\&4]{ConnMosc}, see also \cite[Sect. 2]{Crai02}.

Let $\Hc$ be a Hopf algebra with a modular pair $(\d,\s)$ in involution (MPI). 
In other words, $\d:\Hc\lra k$ is a character, and
$\s\in\Hc$ a group-like satisfying the modularity condition
\begin{equation}
\d(\s) = 1 \quad\text{ and }\quad S_\d^2 = \Ad_\s,\quad\text{ where }\quad S_\d(h) := \d(h\ps{1})S(h\ps{2}),
\end{equation}
for all $h\in H$. Then the Hopf-cyclic cohomology $HC^\ast(\Hc,\d,\s)$ of $\Hc$, relative to the pair $(\d,\s)$ is defined to be the cyclic cohomology of the cocyclic module \cite{ConnMosc98}
\begin{equation*}
C^\ast(\Hc,\d,\s)=\bigoplus_{n\geq 0}C^n(\Hc,\d,\s),\qquad C^n(\Hc,\d,\s):= \Hc^{\ot\, n}.
\end{equation*}
The coface operators $d_i: C^n(\Hc,\d,\s)\ra C^{n+1}(\Hc,\d,\s)$ are
defined for $0\le i\le n+1$ as
\begin{align}\label{aux-Hopf-cyclic-coface}
d_i(h^1\odots h^n) = 
  \begin{cases}
    1\ot h^1\odots h^n, & \text{ if } i=0,\\
    h^1\odots h^i\ps{1}\ot h^i\ps{2}\odots h^n, & \text{ if } 0\leq i\leq n,\\
    h^1\odots h^n\ot \s, & \text{ if } i=n+1.
  \end{cases}
\end{align}
We define the codegeneracy operators
$s_j: C^n(\Hc,\d,\s)\ra C^{n-1}(\Hc,\d,\s)$ for $0\le j\le n-1$ as
\begin{align}\label{aux-Hopf-cyclic-codegeneracy}
s_j (h^1\odots h^n)= h^1\odots \ve(h^{j+1})\odots h^n,
\end{align}
and the cyclic operators $t_n: C^n(\Hc,\d,\s)\ra C^n(\Hc,\d,\s)$ as
\begin{align}\label{aux-Hopf-cyclic-cyclic}
t_n(h^1\odots h^n)=S_\d(h^1)\cdot(h^2\odots h^n\ot \s).
\end{align}

\subsection{Connes-Moscovici characteristic map}

Let us next recall the Connes-Moscovici characteristic homomorphism, \cite{ConnMosc98,ConnMosc}. Let $\Ac$ be a $\Hc$-module algebra, that is, for all
$h\in \Hc$ and for all $a,b\in\Ac$,
\begin{equation}
h(ab) = h\ps{1}(a)h\ps{2}(b),\qquad h(1)=\ve(h)1.
\end{equation}
Then a linear form $\tau:\Ac\lra k$ is called a $\s$-trace if
\begin{equation}
\tau(ab)=\tau(b\s(a)),
\end{equation}
and the $\s$-trace $\tau:\Ac\lra k$ is called $\d$-invariant if
\begin{equation}
\tau(h(a))=\d(h)\tau(a),
\end{equation}
for all $h\in\Hc$ and $a,b\in \Ac$.

Now let $\Hc$ be a Hopf algebra with a MPI $(\d,\s)$, and $\Ac$ an $\Hc$-module algebra equipped with a $\d$-invariant $\s$-trace. It follows then that the morphisms $\chi_\tau\colon C^n(\Hc,\d,\s) \lra C^n(\Ac)$,
\begin{equation}\label{aux-char-homom-H-A}
\chi_\tau(h^1 \odots h^n)(a^0,\ldots,a^n) := \tau(a^0h^1(a^1)\ldots h^n(a^n)),
\end{equation}
induce a characteristic homomorphism on the cohomology $\chi_\tau\colon HC(\Hc,\d,\s)\lra HC(\Ac)$.

\subsection{Hopf-cyclic cohomology of module algebras}
We now recall from \cite{HajaKhalRangSomm04-II} the Hopf-cyclic cohomology theory for the module algebra symmetry. In order to define the coefficient spaces, we first note that a modular pair in involution is an example of a one dimensional stable anti-Yetter Drinfeld (SAYD) module, \cite{HajaKhalRangSomm04-I}. In general, a right module - left comodule $V$ over a Hopf algebra $\Hc$ is called a right-left SAYD module over $\Hc$ if
\begin{equation*}
\nb(h\cdot v) = S(h\ps{3})v\ns{-1}h\ps{1}\ot v\ns{0},\qquad v\ns{0}\cdot v\ns{-1}=v
\end{equation*}
for any $v\in V$ and any $h\in\Hc$. Here $\nb:V\lra\Hc\ot V$ given by $v\mapsto v\ns{-1}\ot v\ns{0}$ refers to the left $\Hc$-coaction on $V$.

Let $\Ac$ be an $\Hc$-module algebra and $V$ a SAYD module over $\Hc$. We recall from \cite{HajaKhalRangSomm04-II,Rang08} that the graded space
\begin{equation*}
C_\Hc^*(\Ac,V) = \bigoplus_{n\geq}C_\Hc^n(\Ac,V),\qquad C_\Hc^n(\Ac,V):=\Hom_\Hc(V\ot\Ac^{\ot\,n+1},k)
\end{equation*}
becomes a cocyclic module via the coface maps
$\p_i:C_\Hc^n(\Ac,V)\lra C_\Hc^{n+1}(\Ac,V)$, defined for
$0\leq i\leq n+1$ 
\begin{align*}
  \p_i\vp(v, a^0,\ldots, a^{n+1}) = 
  \begin{cases}
    \vp(v,a^0,\ldots, a^ia^{i+1}, \ldots, a^{n+1}), 
            & \text{ if } 0\leq i\leq n,\\
    \vp(v\ns{0},S^{-1}(v\ns{-1})(a^{n+1})a^0, a^1, \ldots, a^n), 
            & \text{ if } i = n+1,
  \end{cases}
\end{align*}
the codegenerecy maps
$s_j:C_\Hc^n(\Ac,V)\lra C_\Hc^{n-1}(\Ac,V)$, defined for $0\leq j\leq n-1$ by
\begin{align*}
 s_j\vp(v,a^0,\ldots, a^{n-1})  = \vp(v,a^0,\ldots, a^j, 1, a^{j+1}, \ldots, a^{n-1}),
\end{align*}
and the cyclic operators $t_n:C_\Hc^n(\Ac,V)\lra C_\Hc^n(\Ac,V)$,
\begin{align*}
t_n \vp(v,a^0,\ldots, a^n) 
= \vp(v\ns{0},S^{-1}(v\ns{-1})(a^n), a^0,\ldots, a^{n-1}).
\end{align*}
The cyclic homology of this cocyclic module is called the Hopf-cyclic cohomology of the $\Hc$-module algebra $\Ac$ with coefficients, and is denoted by $HC_\Hc^\ast(\Ac,V)$. We note from \cite{HajaKhalRangSomm04-II} that if $\s\in{\rm Aut}(\Ac)$, then with the Hopf algebra $\Hc=k[\s,\s^{-1}]$ of Laurent polynomials and $V={}^{\s^{-1}} k$ we recover the twisted cyclic cohomology.

\subsection{Hopf-cyclic cohomology of module coalgebras}\label{CocyclicObject}
Let us next recall the Hopf-cyclic cohomology of module coalgebras with SAYD coefficients. Let $\Cc$ be a left $\Hc$-module coalgebra.  That is, $\Hc$ acts on $\Cc$ such that
\begin{equation}
\D(h\cdot c) = h\ps{1}\cdot c\ps{1} \ot h\ps{2}\cdot c\ps{2}, \qquad \ve(h\cdot c) = \ve(h)\ve(c),
\end{equation}
for any $h\in \Hc$, and any $c\in \Cc$. Let also $V$ be a right-left SAYD module over $\Hc$. Then the Hopf-cyclic cohomology of $\Cc$ under the symmetry of 
$\Hc$ is given by the cocyclic module of the coface operators $\p_i: C^n_\Hc(\Cc,V)\lra C^{n+1}_\Hc(\Cc,V)$ defined for $0\le i\le n+1$ by
\begin{align}\label{aux-coface-C}
\p_i(v\ot_\Hc c^0\odots c^n) = 
  \begin{cases}
  v\ot_\Hc c^0\odots c^i\ps{1}\ot c^i\ps{2}\odots c^n, 
         & \text{ if } 0\leq i\leq n,\\
  v\ns{0}\ot_\Hc c^0\ps{2} \ot c^1\odots c^n\ot v\ns{-1}\cdot c^0\ps{1},
         & \text{ if } i=n+1,
  \end{cases}
\end{align}
the codegeneracy operators
$\s_j: C^n_\Hc(\Cc,V)\lra C^{n-1}_\Hc(\Cc,V)$ for $0\le j\le n-1$
\begin{align}\label{aux-codeg-C}
\s_j (v\ot_\Hc c^0\odots c^n)= v\ot_\Hc c^0\odots \ve(c^{j+1})\odots c^n,
\end{align}
and the cocyclic operators $\tau_n: C^n_\Hc(\Cc,V)\lra C^n_\Hc(\Cc,V)$
\begin{align}\label{aux-cyclic-C}
\tau_n(v\ot_\Hc c^0\odots c^n)=v\ns{0}\ot_\Hc c^1\odots v\ns{-1}\cdot c^0.
\end{align}
The cyclic homology of this cocyclic module is denoted by $HC_\Hc^\ast(\Cc,V)$. In particular, if $\Cc=\Hc$ which is considered as a left $\Hc$-module coalgebra by the left regular action of $\Hc$ on itself, the Hopf-cyclic cohomology with coefficients of the Hopf algebra $\Hc$ is denoted by $HC^\ast(\Hc,V)$. If, furthermore, $V={}^\s k_\d$ the one dimensional SAYD module by a MPI $(\d,\s)$ of $\Hc$, the cocyclic structure given by \eqref{aux-coface-C}, \eqref{aux-codeg-C} and \eqref{aux-cyclic-C} reduces to the one given by \eqref{aux-Hopf-cyclic-coface}, \eqref{aux-Hopf-cyclic-codegeneracy} and \eqref{aux-Hopf-cyclic-cyclic}, \cite{HajaKhalRangSomm04-II}.

\subsection{The characteristic map and untwisting}\label{Untwisting}
Let us recall from \cite[Thm. 2.8]{Kayg08} and
\cite[Prop. 2.3]{Rang08} that if $\Ac$ is a left $\Hc$-module algebra and $V$ a right-left SAYD module over $\Hc$, then there is a cup
product
\begin{equation}\label{cup}
\cup:HC^p_\Hc(\Ac,V)\ot HC^q(\Hc,V) \lra HC^{p+q}(\Ac).
\end{equation}
On the level of Hochschild cohomology, it is given by the formula
\begin{equation*}
(\vp\cup(v\ot h^1\odots h^p))(a^0,\ldots, a^{p+q}) = \vp(v,a^0h^1(a^1)\ldots h^p(a^p), a^{p+1},\ldots, a^{p+q}),
\end{equation*}
and, following \cite{KhalRang05-II,Rang08}, in the level of cyclic cohomology by
\begin{align*}
(\vp\cup \widetilde{h})(a^0,\ldots, a^{p+q}) = \sum_{\mu\in Sh(q,p)} (-1)^{\mu}\p_{\wbar{\mu}(q)}\ldots \p_{\wbar{\mu}(1)}\vp(\p_{\wbar{\mu}(q+p)}\ldots \p_{\wbar{\mu}(q+1)}\widetilde{h}(a^0,\ldots, a^{p+q})),
\end{align*}
where $\widetilde{h} = v\ot_\Hc h^0\odots h^p$, $\wbar{\mu}(\ell) = \mu(\ell) - 1$, and $Sh(q,p)$ denotes the set of all $(p,q)$-shuffles. We note also that, for a Hopf algebra $\Hc$ with a MPI $(\d,\s)$, the cup product by a 0-cocycle $\tau \in HC^0(\Hc,{}^\s k_\d)$ induces the Connes-Moscovici characteristic homomorphism \eqref{aux-char-homom-H-A}.

We will use the cup product \eqref{cup} to untwist the twisted cyclic cohomology. To this end, we first note that the Hopf-cyclic cohomology of
the Hopf algebra of Laurent polynomials $k[\sigma,\sigma^{-1}]$ with coefficients in the SAYD module corresponding to the MPI
$(\varepsilon,\sigma^{-1})$, is concentrated in degree 1.  More precisely,
\begin{equation*}
  HC^1(k[\sigma,\sigma^{-1}],{}^{\s^{-1}}k)=\langle \one\ot (1-\s^{-1}) \rangle,
\end{equation*}
Then specializing \eqref{cup} to
\begin{equation*}
  \cup \colon HC^p_\sigma(\mathcal{A}) \otimes
HC^1(k[\sigma,\sigma^{-1}],{}^{\sigma^{-1}}k) \lra HC^{p+1}(\mathcal{A}),
\end{equation*}
we get a characteristic map
\begin{equation}\label{aux-char-map-untwist}
\chi:HC^n_\s(\Ac) \lra HC^{n+1}(\Ac),
\end{equation}
which is given in the level of Hochschild cohomology by
\begin{equation}\label{aux-char-map-untwist-hoch-level}
\chi(\vp)(a^0,\ldots,a^{n+1})=\vp(a^0(1-\s^{-1})(a^1),a^2,\ldots,a^{n+1}),
\end{equation}
and in the level of cyclic cohomology by
\begin{align*}
\chi(\vp)(a^0,\ldots,a^{n+1}) = \sum_{\mu\in Sh(1,n)} (-1)^{\mu}d_{\wbar{\mu}(1)}\vp(\p_{\wbar{\mu}(n+1)}\ldots \p_{\wbar{\mu}(2)}(1-\s^{-1})(a^0,\ldots, a^{p+q})).
\end{align*}

We would like to note here that the untwisting phenomenon via a cup
product explains in part the dimension drop phenomenon observed in
\cite{FengTsyg91}, and also Goodman and Krähmer's result
\cite[Thm. 1.1]{GoodKrah14} that the smash product of a twisted
Calabi-Yau algebra of dimension $d$ with the Laurent polynomial ring
is an untwisted Calabi-Yau algebra of dimension $d+1$.

\subsection{The characteristic map for compact
  groups}\label{ClassicalVanEst}

We conclude this section by investigating the characteristic
homomorphism \eqref{aux-char-homom-H-A} following \cite{ConnMosc98}
(see also \cite{HochKostRose62}) in the case of $\Hc=U(\Fg)$ and
$\Ac=\Oc(G)$ where $G$ is one of the (unimodular) groups $SL(N), SO(N)$ or $Sp(N)$
where we have a non-trivial invariant Haar functional, and $\Fg$ the
Lie algebra of $G$. In these cases, $(\ve,1)$ is a MPI for the Hopf
algebra $U(\Fg)$, and $\Oc(G)$ is a left $U(\Fg)$-module algebra via
$u(f)(x) := f(x\lt u)$ induced from the action of $\Fg$ on $G$ for any
$u\in U(\Fg)$, any $f\in \Oc(G)$, and any $x\in G$.

Let $\mu$ be the Haar measure on $G$. Then the functional $h\colon \Oc(G)\lra k$ defined by $h(f):=\int_G f(x)d\mu(x)$ form an invariant trace for the (commutative) Hopf algebra $\Oc(G)$. Indeed, for any $X\in \Fg$, and any $f \in \Oc(G)$,
\begin{align}
h(X\rt f) 
= &\int_G (X\rt f)(x)d\mu(x) \nonumber\\
= & \int_G \dt f(\exp(tX)\,x\,\exp(tX))d\mu(x) \nonumber\\
= & \int_G \dt f(y)d\mu(\exp(tX)\,y\,\exp(tX))\\
= & \int_G \dt f(y)d\mu(y)\nonumber\\
= & 0 = \d(X)h(f),\nonumber
\end{align}
where on the third equality we use the invariance of the Haar measure. We also note that by the unimodularity of the group $G$, the trace of the adjoint representation of $\Fg$ on itself vanishes. As a result, we have $ \chi:HC^\ast(U(\Fg), k_\d) \lra HC^\ast(\Oc(G))$ defined as
\begin{align}\label{aux-char-homom}
\chi(u^1,\ldots,u^n)(f^0,\ldots,f^n):=\int_G f^0(x)(u^1(f^1))(x)\ldots (u^n(f^n))(x)d\mu(x),
\end{align}
where $u(f)\in \Oc(G)$, for an arbitrary $u\in U(\Fg)$ and $f\in \Oc(G)$, denotes the left coregular action.
We also recall from \cite[Thm. 15]{ConnMosc} that 
\begin{equation*} 
HP^\ast(U(\Fg), k_\d) \cong \displaystyle\bigoplus_{n =\ast\,{\rm
    mod}\,2} \,H_n(\Fg,k_\d) 
\end{equation*}
via the anti-symmetrization
$ant: k_\d\,\ot\,\bigwedge^\ast\Fg \lra C^\ast(U(\Fg), k_\d)$, and from
\cite[Thm. 46]{Conn85} that
\begin{equation*}
HP^\ast(\Oc(G))\cong \displaystyle\bigoplus_{n =\ast\,{\rm mod}\,2}
\,H^{\rm dR}_n(G) 
\end{equation*}
via a map $\vp\mapsto C$ given by
\begin{equation*}
\langle C, f^0df^1\wdots df^n\rangle = \sum_{\s\in S_n} (-1)^\s
\vp(f^0,f^{\s(1)},\ldots, f^{\s(n)}).
\end{equation*}
Here $H^{\rm dR}_\ast(G)$ refers to the de~Rham homology of $G$. In
the reverse direction, from \cite[Thm. 3.2.14]{Connes-book} we have
$\Phi\colon H^{\rm dR}_\ast(G)\lra HC^\ast(\Oc(G))$ given by
\[ \Phi(C)(f^0,f^1,\ldots, f^n) = \langle C, f^0df^1\wdots
df^n\rangle. \]
Hence, following \cite[Lemma 8\,\& 9]{ConnMosc98}, we arrive at the commutative diagram
\begin{equation}
\xymatrix{
HP^\ast(U(\Fg), k_\d) \ar[r]^\chi & HP^\ast(\Oc(G)) \\
\displaystyle\bigoplus_{n =\ast\,{\rm mod}\,2} \,H_n(\Fg,k_\d) \ar[r]\ar[u]^{ant} & \displaystyle\bigoplus_{n =\ast\,{\rm mod}\,2} \,H^{\rm dR}_n(G) \ar[u]_\Phi
}
\end{equation}
which is the periodic version of \eqref{aux-char-homom} up to Poincaré duality.

\section{Quantum characteristic map}
In this section we will define a quantum analogue of the characteristic homomorphism for compact quantum group algebras. To this end we will first recall the quantum enveloping algebras, and their Hopf-cyclic cohomology, as well as the compact quantum group algebras from \cite{KlimSchm-book}. Then using the modular property of the Haar functional we construct a characteristic homomorphism similar to that of Connes and Moscovici \cite{ConnMosc98}. 

\subsection{Quantum enveloping algebras (QUE algebras)}\label{subsect-Uq-g}

Following \cite[Subsect. 6.1.2]{KlimSchm-book}, let $\Fg$ be a finite dimensional complex semi-simple Lie algebra, 
$A=[a_{ij}]$ the Cartan matrix of $\Fg$, and $d_i \in \{1,2,3\}$ for $1\leq i\leq \ell$ so that $DA=[d_ia_{ij}]$ is the symmetrized Cartan matrix. Let also $q$ be a fixed nonzero complex number such that $q_i^2\neq 1$, where $q_i:= q^{d_i}$.

Then the quantum enveloping algebra $U_q(\Fg)$ is the Hopf algebra with $4\ell$ generators $E_i,F_i,K_i,K_i^{-1}$, $1\leq i\leq \ell$, and the relations
\begin{align*}
& K_iK_j = K_jK_i,\qquad K_iK_i^{-1}=K_i^{-1}K_i=1,\\
& K_iE_jK_i^{-1}=q_i^{a_{ij}}E_j,\qquad K_iF_jK_i^{-1}=q_i^{-a_{ij}}F_j,\\
& E_iF_j-F_jE_i=\d_{ij}\frac{K_i-K_i^{-1}}{q_i-q_i^{-1}},\\
& \sum_{r=0}^{1-a_{ij}}(-1)^r\left[\begin{array}{c}
                                              1-a_{ij} \\
                                              r
                                            \end{array}
\right]_{q_i}E_i^{1-a_{ij}-r}E_jE_i^r=0, \quad i\neq j,\\
& \sum_{r=0}^{1-a_{ij}}(-1)^r\left[\begin{array}{c}
                                              1-a_{ij} \\
                                              r
                                            \end{array}
\right]_{q_i}F_i^{1-a_{ij}-r}F_jF_i^r=0, \quad i\neq j,
\end{align*}
where
\begin{equation*}
\left[\begin{array}{c}
        n \\
        r
      \end{array}
\right]_q = \frac{(n)_q\,!}{(r)_q\,!\,\,(n-r)_q\,!},\qquad (n)_q:=\frac{q^n-q^{-n}}{q-q^{-1}}.
\end{equation*}
The rest of the Hopf algebra structure of $U_q(\Fg)$ is given by
\begin{align}\label{aux-Uq-mod-alg}
\begin{split}
& \D(K_i)=K_i\ot K_i,\quad \D(K_i^{-1})=K_i^{-1}\ot K_i^{-1} \\
& \D(E_i)=E_i\ot K_i + 1\ot E_i,\quad \D(F_j)=F_j\ot 1 + K_j^{-1}\ot F_j \\
& \ve(K_i)=1,\quad \ve(E_i)=\ve(F_i)=0\\
& S(K_i)=K_i^{-1},\quad S(E_i)=-E_iK_i^{-1},\quad S(F_i)=-K_iF_i.
\end{split}
\end{align}

\subsection{Cohomology of QUE algebras}\label{CohomologyOfQUEAs}
Let us recall the Hochschild cohomology of the quantized enveloping
algebras $U_q(\Fg)$ from \cite{KaygSutl14}.  However, we develop here
a different strategy than \emph{op.cit.}

A modular pair in involution (MPI) for the Hopf algebra $U_q(\Fg)$ is
given by \cite[Prop. 6.1.6]{KlimSchm-book}. Let
$K_\lambda:=K_1^{n_1}\ldots K_\ell^{n_\ell}$ for any
$\lambda=\sum_in_i\a_i$, where $n_i\in \Zb$. Then, $\rho \in \Fh^\ast$
being the half-sum of the positive roots of $\Fg$, by
\cite[Prop. 6.1.6]{KlimSchm-book} we have
\begin{equation}\label{aux-gr-like-q}
S^2(a) = K_{2\rho}aK^{-1}_{2\rho}
\end{equation}
for all $a\in U_q(\Fg)$. Thus, $(\ve,K_{2\rho})$ is a MPI for the Hopf
algebra $U_q(\Fg)$. We shall use the notation $\s:=K_{2\rho}$. In view
of the arguments in Subsection \ref{subsect-cohom-coext}, and
following \cite{KaygSutl14}, for
\begin{equation*}
U_q(\Fb_+)={\rm Span}\left\{E^{r_1}_1\ldots E_\ell^{r_\ell}K^{q_1}_1\ldots K_\ell^{q_\ell}\,|\,r_1,\ldots,r_\ell\geq 0,\,q_1,\ldots, q_\ell \in \Zb\right\},
\end{equation*}
we consider the coextension $\pi:U_q(\Fg)\to U_q(\Fb_+)$ which is defined as
\begin{equation*}
  \pi(E_1^{r_1}\cdots E_\ell^{r_\ell}K_1^{q_1}\cdots K_\ell^{q_\ell}F_1^{s_1}\cdots F_\ell^{s_\ell})= 
  \begin{cases}
    E_1^{r_1}\cdots E_\ell^{r_\ell}K_1^{q_1}\cdots K_\ell^{q_\ell} & \text{ if } r_1+\cdots+r_\ell=0,\\
    0 & \text{ otherwise}.
  \end{cases}
\end{equation*}
Because we have a Poincaré-Birkhoff-Witt basis for $U_q(\Fg)$, it
is coflat over the coalgebra $U_q(\Fb_+)$.  Thus, by
\cite[Prop. 4.8]{KaygSutl14} and \cite[Lemma 5.1]{Crai02},
\begin{equation}
  HH^n(U_q(\Fg),{}^{\s} k)=
  \begin{cases}
    k^{\oplus \,2^\ell} & \text{ if } n=\ell \\
    0 & \text{ if } n\neq \ell.
  \end{cases}
\end{equation}
In particular, for $\Fg=s\ell_2$, we calculate the same classes as
\cite[Prop. 5.9]{Crai02}. Namely,
\begin{equation}\label{aux-Hopf-cyclic-class-Uq-sl2}
  HH^n(U_q(s\ell_2),{}^{\s} k) = 
  \begin{cases}
    \langle E, KF\rangle & \text{ if } n=1 \\
    0 & \text{ if } n\neq 1.
  \end{cases}
\end{equation}

We finally note that along the way to compute the Hochschild (co)homology of $U_q(\Fg)$, regarded as an algebra, the $\Tor$-groups 
$\Tor_*^{U_q(\mathfrak{g})}(k,k)$ and the $\Ext$-groups
$\Ext_{U_q(\mathfrak{g})}^*(k,k)$ are obtained in \cite{Drupieski:2013}.

\subsection{Compact quantum group algebras (CQG algebras)}\label{CQGAlgebras}

In this subsection we will construct a characteristic map
$HC^p(U_q(\Fg),{}^\s k) \lra HC^{p+1}(\Oc(G_q))$.
In order to do this, we will use the existence of a unique Haar state on the
coordinate algebras, as well as their pairing with the QUE algebras.

We begin with the definition of the coordinate algebras of the quantum
groups from \cite[Sect. 11.3]{KlimSchm-book}.

\begin{definition}
A Hopf $\ast$-algebra $\Hc$ is called a compact quantum group (CQG) algebra if $\Hc$ is the linear span of all matrix elements of finite dimensional unitary corepresentations of $\Hc$. A compact matrix quantum group (CMQG) algebra is a CQG algebra which is generated, as an algebra, by finitely many elements.
\end{definition}
Among examples of CMQG algebras are the Hopf $\ast$-algebras $\Oc(U_q(N))$, $\Oc(SU_q(N))$, $\Oc(O_q(N;\Rb))$, $\Oc(SO_q(N;\Rb))$ and $\Oc(Sp_q(N))$, see \cite[Ex. 11.7]{KlimSchm-book}. For any compact group $G$, the Hopf algebra $R(G)$ of representative functions is a CQG algebra. Also, if $\mu$ is the Haar measure on such a group $G$ then $h:R(G)\lra k$ given by $h(f):=\int_G f(x)d\mu(x)$ is the corresponding Haar functional.

\begin{theorem}\label{MainResult1}
  If $\Ac = \Oc(G_q)$ is a CQG
  algebra, then there is a characteristic map of the form
  \begin{align}\label{aux-char-homom-q}
    & \chi_q: HC^\ast(U_q(\Fg), {}^{\s} k) \lra HC_{\s^{-1}}^\ast(\Oc(G_q)),\\
    & \chi_q (y^1,\ldots,y^n)(f^0,\ldots,f^n):=h(f^0y^1(f^1)\ldots y^n(f^n)).\nonumber
  \end{align}
\end{theorem}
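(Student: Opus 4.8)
The plan is to realize $\chi_q$ as an instance of the cup product machinery from Subsection~\ref{Untwisting} specialized to the module algebra $\Ac=\Oc(G_q)$. The first task is to exhibit $\Oc(G_q)$ as a left $U_q(\Fg)$-module algebra. This is standard: the dual pairing $\langle\,\cdot\,,\,\cdot\,\rangle\colon U_q(\Fg)\otimes \Oc(G_q)\to k$ gives a left coregular action $y\cdot f := f\ps{1}\langle y, f\ps{2}\rangle$ (or the right-handed variant, depending on conventions), and the fact that the pairing is a Hopf pairing guarantees that $\Oc(G_q)$ becomes a left $U_q(\Fg)$-module algebra, i.e. $y(fg)=y\ps{1}(f)y\ps{2}(g)$ and $y(1)=\ve(y)1$. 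One should cite \cite[Sect.~1.4 and~Ch.~11]{KlimSchm-book} for the pairing and the module algebra property.

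The second and central task is to identify the correct functional. By \cite[Thm.~11.?]{KlimSchm-book} (the uniqueness of the Haar state on a CQG algebra) there is a unique state $h\colon\Oc(G_q)\to k$ with $h(f\ps{1})f\ps{2}=h(f)1=f\ps{1}h(f\ps{2})$. The crucial input is its modularity property, \cite[Prop.~11.34]{KlimSchm-book}: there is an algebra automorphism $\theta$ of $\Oc(G_q)$ — the modular automorphism, expressible via the action of $\s=K_{2\rho}$ — with $h(fg)=h(g\,\theta(f))$ for all $f,g$. We take $\sigma$ in the statement to be this $\theta$. Thus $h$ is a $\sigma$-trace. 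One then checks $\d$-invariance with $\d=\ve$: for $y\in U_q(\Fg)$ one has $h(y(f)) = \langle y, f\ps{2}\rangle h(f\ps{1}) = \langle y, 1\rangle h(f) = \ve(y)h(f)$, using left-invariance of $h$ in the form $h(f\ps{1})f\ps{2}=h(f)1$ (paired with $y$ this pulls $y$ onto the scalar factor). So $h$ is an $\ve$-invariant $\sigma$-trace, exactly the data feeding the Connes–Moscovici-type cup product with a $0$-cocycle from Subsection~\ref{Untwisting}.

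With these ingredients in place, the cocycle-level formula
\[
\chi_q(y^1\odots y^n)(f^0,\ldots,f^n)=h\bigl(f^0\,y^1(f^1)\cdots y^n(f^n)\bigr)
\]
is precisely the cup product of $h\in HC^0_{\sigma}(\Oc(G_q))$, viewed through the pairing as an element detecting the module-algebra Hopf-cyclic theory, with the Hopf-cyclic class of $(y^1\odots y^n)$ in $C^n(U_q(\Fg),{}^{\sigma}k)$; equivalently one checks directly that $\chi_q$ is a morphism of cocyclic modules from $C^\ast(U_q(\Fg),{}^{\sigma}k)$ to the $\sigma^{-1}$-twisted cyclic complex $C^\ast_{\sigma^{-1}}(\Oc(G_q))$. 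Concretely, one verifies compatibility with cofaces, codegeneracies, and the cyclic operator: the coface $d_0$ (insertion of $1$) matches $d_0$ on the algebra side; the inner cofaces match multiplication $f^if^{i+1}$ using the module-algebra identity $y(f^if^{i+1})=y\ps{1}(f^i)y\ps{2}(f^{i+1})$ together with coassociativity; the last coface $d_{n+1}$, which on $U_q(\Fg)$ appends $\s=K_{2\rho}$, matches the twisted last coface $f\mapsto \sigma^{-1}(f^{n+1})f^0$ on $\Oc(G_q)$ — and this is where the modularity $h(fg)=h(g\sigma(f))$ is indispensable; and compatibility with $t_n$ uses the cyclic operator $t_n(h^1\odots h^n)=S_\d(h^1)\cdot(h^2\odots h^n\ot\s)$ from~\eqref{aux-Hopf-cyclic-cyclic} against the twisted cyclic operator $t_n\vp(a^0,\ldots,a^n)=\vp(\sigma(a^n),a^0,\ldots,a^{n-1})$, again invoking the $\sigma$-trace property and $S_\ve^2=\Ad_\s$.

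The main obstacle is the bookkeeping in the verification that the last coface and the cyclic operator are respected — specifically, reconciling the appearance of $\s=K_{2\rho}$ in the cocyclic structure of $U_q(\Fg)$ with the twist $\sigma^{-1}$ on $\Oc(G_q)$, which forces one to pin down exactly how the modular automorphism of the Haar state relates to the action of $K_{2\rho}$ via the pairing (up to the inversion coming from ``left'' versus ``right'' coregular action conventions). Once the sign of the twist and the direction of the action are fixed consistently, the remaining identities are routine applications of coassociativity, the Hopf pairing axioms, and the modularity of $h$. Alternatively, and perhaps more cleanly, one can deduce the theorem without any direct cocycle computation by invoking the cup product~\eqref{cup} of Subsection~\ref{Untwisting} with $\Ac=\Oc(G_q)$, $\Hc=U_q(\Fg)$, and the SAYD module ${}^{\s}k$, once one observes that the $\ve$-invariant $\sigma$-trace $h$ represents a class in $HC^0_{U_q(\Fg)}(\Oc(G_q),{}^{\s}k)$ whose cup product with $HC^q(U_q(\Fg),{}^{\s}k)$ lands, by the modularity of $h$, in the $\sigma^{-1}$-twisted theory $HC^q_{\sigma^{-1}}(\Oc(G_q))$ rather than the untwisted one.
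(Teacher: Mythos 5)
Your main route---realize $\Oc(G_q)$ as a left $U_q(\Fg)$-module algebra via the coregular action through the pairing, use invariance and modularity of the Haar state, and check directly that $\chi_q$ intertwines the Hopf-cyclic cofaces, codegeneracies and cyclic operator with the twisted cyclic structure---is exactly the strategy of the paper's proof. However, there is a genuine gap at the one point you yourself call ``the main obstacle'': the identification of the twist. You take the twist in $HC^\ast_{\s^{-1}}(\Oc(G_q))$ to be (the inverse of) the modular automorphism $\theta$ of the Haar state, and declare $h$ to be a $\theta$-trace. That is not the identification that makes the verification work. By \cite[Prop.~11.34]{KlimSchm-book} the modularity is \emph{two-sided}: $h(ab)=h(b(\s\rt a\lt \s))$ with $\s=K_{2\rho}$ acting by both the left and the right coregular actions; in particular it is not given by a module-algebra action of the group-like $\s$, so $h$ is not a $\s$-trace in the Connes--Moscovici sense (the remark following the theorem in the paper makes exactly this point). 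In the check of the last coface and of the cyclic operator, the Hopf-cyclic structure inserts $\s$ through the \emph{left} coregular action, and this left factor is absorbed by the two-sided modularity, leaving the one-sided right coregular action as the residual twist: $h\bigl(X\,(\s\rt f^{n+1})\bigr)=h\bigl((f^{n+1}\lt\s^{-1})\,X\bigr)$. Thus the automorphism $\s^{-1}$ in the target is $f\mapsto f\lt\s^{-1}$, not $\theta^{-1}(f)=\s^{-1}\rt f\lt\s^{-1}$; pinning down this cancellation is the actual content of the theorem (it is why the target is twisted at all), and your write-up leaves it unresolved and, as stated, mis-identified.

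Your proposed ``cleaner'' alternative via the cup product \eqref{cup} also does not go through as written: that cup product lands in the \emph{untwisted} cyclic cohomology $HC^{p+q}(\Ac)$ and presupposes an honest $\d$-invariant $\s$-trace for a module-algebra action, which $h$ is not; running the genuine Connes--Moscovici construction here (via the modular square) produces a map into ordinary cyclic cohomology, which is zero for $SU(2)$ (cf.\ the remark citing \cite{KRT}). To land in $HC^\ast_{\s^{-1}}$ one would have to establish a twisted version of the cup product, and that is precisely the chain-level computation of your first route. So the direct verification is the proof, but it must be carried out with the twist correctly identified as the right coregular action of $K_{2\rho}^{-1}$, using $h(ab)=h(b(\s\rt a\lt\s))$ together with the $\ve$-invariance $h(y(f))=\ve(y)h(f)$ and the module-algebra identity.
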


\begin{proof}
  Every CQG algebra possesses a unique (left and right invariant) Haar
  functional due to their cosemisimplicity
  \cite[Thm. 11.13]{KlimSchm-book}. In view of
  \cite[Eq. 11(36)]{KlimSchm-book}, it follows from
  \cite[Prop. 11.34]{KlimSchm-book} that the Haar functional
  $h:\Ac\lra k$ on a CQG algebra of the form $\Ac = \Oc(G_q)$ has the
  crucial property that
  \begin{equation}\label{modularity}
    h(ab)=h(b(\s\rt a\lt \s))
  \end{equation}
  for $\s=K_{2\rho}\in U_q(\Fg)$ with the left ad the right coregular
  actions. We will observe the compatibility of the map
  \eqref{aux-char-homom-q} with the Hopf-cyclic coface operators
  \eqref{aux-Hopf-cyclic-coface}, codegeneracies
  \eqref{aux-Hopf-cyclic-codegeneracy}, and the cyclic operator
  \eqref{aux-Hopf-cyclic-cyclic}. Accordingly, we first show that
\begin{align*}
\chi_q(d_0(y^1,\ldots,y^n))(f^0,\ldots,f^{n+1}) 
= & \chi_q(1,y^1,\ldots,y^n)(f^0,\ldots,f^{n+1})\\
= & h(f^0f^1y^1(f^2)\ldots y^n(f^{n+1})) \\
= & d_0\chi_q(y^1,\ldots,y^n)(f^0,\ldots,f^{n+1}).
\end{align*}
Next we observe for $1\leq i \leq n$ that
\begin{align*}
\chi_q(d_i(y^1,\ldots,y^n))(f^0,\ldots,f^{n+1}) 
= & \chi_q(y^1,\ldots,\D(y^i),\ldots,y^n)(f^0,\ldots,f^{n+1}) \\
= & h(f^0y^1(f^1)\ldots y^i(f^if^{i+1})\ldots y^n(f^{n+1}))\\
= & d_i\chi_q(y^1,\ldots,y^n)(f^0,\ldots,f^{n+1}).
\end{align*}
As for the last coface map we have
\begin{align*}
\chi_q(d_{n+1}(y^1,\ldots,y^n))(f^0,\ldots,f^{n+1}) 
= & \chi_q(y^1,\ldots, y^n,\s)(f^0,\ldots,f^{n+1}) \\
= & h(f^0y^1(f^1)\ldots y^n(f^n)\s(f^{n+1}))\\
= & h((f^{n+1}\lt \s^{-1})f^0y^1(f^1)\ldots y^n(f^n))\\
= & \chi_q(y^1,\ldots,y^n)((f^{n+1}\lt \s^{-1})f^0,\ldots,f^n)\\
= & d_{n+1}\chi_q(y^1,\ldots,y^n)(f^0,\ldots,f^{n+1}).
\end{align*}
We proceed to the codegeneracies. We have,
\begin{align*}
\chi_q(s_j(y^1,\ldots,y^n))(f^0,\ldots,f^{n-1})
= & \chi_q(y^1,\ldots,\ve(y^j),\ldots,y^n)(f^0,\ldots,f^{n-1})\\
= & \ve(y^j)h(f^0y^1(f^1)\ldots y^{j-1}(f^{j-1})y^{j+1}(f^{j+1})\ldots y^n(f^{n-1})) \\
= & \chi_q(y^1,\ldots,y^n)(f^0,\ldots,f^j,1,f^{j+1},\ldots f^{n-1})\\
= & s_j\chi_q(y^1,\ldots,y^n)(f^0,\ldots,f^{n-1}).
\end{align*}
Finally, we consider the compatibility with the cyclic operator. We have
\begin{align*}
\chi_q(t(y^1,\ldots,y^n))(f^0,\ldots,f^{n+1})
= & \chi_q(S(y^1)(y^2,\ldots,y^n,\s))(f^0,\ldots,f^n)\\
= & h(f^0S(y^1)(y^2,\ldots,y^n,\s)(f^1,\ldots, f^n))\\
= & h(y^1\ps{1}(f^0S(y^1\ps{2})(y^2,\ldots,y^n,\s)(f^1,\ldots, f^n)))\\
= & h(y^1(f^0)y^2(f^1)\ldots y^n(f^{n-1})\s(f^n))\\
= & h((f^n\lt \s^{-1})y^1(f^0)y^2(f^1)\ldots y^n(f^{n-1}))\\
= & \chi_q(y^1,\ldots,y^n)((f^{n+1}\lt \s^{-1}), f^0,\ldots,f^n)\\
= & t\chi_q(y^1,\ldots,y^n)(f^0,\ldots,f^{n+1}).
\end{align*}
As a result, for a compact group $G$ with Lie algebra $\Fg$, the
morphism $\chi_q$ defined on the chain level by \eqref{aux-char-homom-q} induces a morphism $HC^\ast(U_q(\Fg), {}^{\s} k) \lra HC_{\s^{-1}}^\ast(\Oc(G_q))$ in the level of
cohomology.
\end{proof}

\begin{remark}
  We would like remark that the modularity~\eqref{modularity} of the
  Haar functional is not given by a module algebra action of
  $U_q(\Fg)$ on $\C{O}(G_q)$.  However, it is observed in \cite[Thm. 1,
  Thm. 2]{KRT} that it can be viewed as the module algebra action of the modular
  square of $U_q(\Fg)$, see \cite[Ex. 3.14]{KRT1}. Then the same Haar
  functional induces a Connes-Moscovici characteristic map whose
  target is now the ordinary cyclic cohomology of $\C{O}(G_q)$, \cite[Thm. 2]{KRT}, 
  which is observed to be zero for $G=SU(2)$, \cite[Thm. 9]{KRT}. 
  On the other hand, the characteristic map \eqref{aux-char-homom-q} 
  is also given by \cite[Thm. 8.2]{KRT1}.
\end{remark}

From Subsection~\ref{Untwisting} we conclude the following.

\begin{corollary}\label{MainResult2}
  If $\Ac = \Oc(G_q)$ is a CQG algebra, then there is a
  characteristic map of the form
  \[ \widetilde{\chi}_q:HC^n(U_q(\Fg), {}^{\s} k) \lra
  HC^{n+1}(\Oc(G_q)) \]
  for every $n\geq 0$.
\end{corollary}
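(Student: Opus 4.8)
The plan is to obtain $\widetilde{\chi}_q$ as the composition of the two maps that have already been constructed in the excerpt. Theorem~\ref{MainResult1} gives a characteristic map $\chi_q\colon HC^n(U_q(\Fg),{}^{\s}k)\lra HC^n_{\s^{-1}}(\Oc(G_q))$ landing in the $\s^{-1}$-twisted cyclic cohomology of $\Oc(G_q)$ viewed as an algebra. On the other hand, Subsection~\ref{Untwisting} produces the untwisting characteristic map \eqref{aux-char-map-untwist}, namely $\chi\colon HC^n_{\s}(\Ac)\lra HC^{n+1}(\Ac)$, arising as the cup product with the degree-$1$ generator of $HC^1(k[\sigma,\sigma^{-1}],{}^{\s^{-1}}k)$. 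So first I would record that \eqref{aux-char-map-untwist} applies verbatim with the automorphism taken to be $\s^{-1}$ rather than $\s$ (the subsection is stated for a generic automorphism; here the relevant one is the coregular action $f\mapsto f\lt\s^{-1}$, which is indeed an algebra automorphism of $\Oc(G_q)$ since $\s=K_{2\rho}$ is group-like). This yields a map $\chi\colon HC^n_{\s^{-1}}(\Oc(G_q))\lra HC^{n+1}(\Oc(G_q))$.

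Next I would simply define $\widetilde{\chi}_q := \chi\circ\chi_q$ and observe that it has the asserted source and target and is defined for every $n\geq 0$. If one wants the explicit Hochschild-level formula, it is the composite of \eqref{aux-char-homom-q} with \eqref{aux-char-map-untwist-hoch-level}, giving
\begin{equation*}
\widetilde{\chi}_q(y^1,\ldots,y^n)(f^0,\ldots,f^{n+1}) = h\bigl(f^0(1-\s^{-1}\rt)(f^1)\,y^1(f^2)\ldots y^n(f^{n+1})\bigr),
\end{equation*}
where $(1-\s^{-1}\rt)(f) = f - (f\lt\s^{-1})$; but since both constituent maps are already known to be well defined on cohomology, no further verification is needed at the chain level.

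The only point that genuinely requires a word of justification is that the twisting automorphism appearing in the range of $\chi_q$ (Theorem~\ref{MainResult1}) is literally the same automorphism that the untwisting machinery of Subsection~\ref{Untwisting} is set up to cancel. In Theorem~\ref{MainResult1} the modularity \eqref{modularity} of the Haar functional is $h(ab)=h(b(\s\rt a\lt\s))$, and tracing through the proof one sees that the cyclic structure in the range is the $\tau$-twisted one with $\tau(f)=f\lt\s^{-1}$ composed with $f\mapsto\s\rt f$; these two commute and their composite is the algebra automorphism that \eqref{aux-char-map-untwist} is built to untwist. So I expect the main (and essentially only) obstacle to be bookkeeping: confirming that ``$\s^{-1}$-twisted'' in the notation $HC^\ast_{\s^{-1}}(\Oc(G_q))$ of Theorem~\ref{MainResult1} matches the convention $C^n_\s(\Ac)$ of Subsection~\ref{TwistedCyclic} used to state \eqref{cup} and \eqref{aux-char-map-untwist}, so that the cup product with $HC^1(k[\sigma,\sigma^{-1}],{}^{\s^{-1}}k)$ does indeed apply. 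Once that identification is in place, the corollary is immediate from composing two previously established maps, and the degree shift $n\rightsquigarrow n+1$ is exactly the shift built into the cup product with a degree-$1$ class, which is precisely the shift the paper advertises as explaining the dimension-drop and degree-shift phenomena.
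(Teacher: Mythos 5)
Your proposal is correct and is essentially the paper's own argument: Corollary~\ref{MainResult2} is deduced exactly by composing the characteristic map of Theorem~\ref{MainResult1} with the untwisting cup product \eqref{aux-char-map-untwist} of Subsection~\ref{Untwisting}, the degree shift $n\rightsquigarrow n+1$ coming from cupping with the degree-one generator of $HC^1(k[\sigma,\sigma^{-1}],{}^{\s^{-1}}k)$. One minor bookkeeping slip in your last paragraph: the twist appearing in the range of $\chi_q$ is just the right coregular action $f\mapsto f\lt\s^{-1}$ (the left action by $\s$ coming from the modularity of $h$ cancels against the $\s$ inserted by the last coface), not a composite with $f\mapsto\s\rt f$, but since the untwisting construction applies to an arbitrary algebra automorphism this does not affect your argument.
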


\section{The characteristic map between $U_q(s\ell_2)$ and
  $\Oc(SL_q(2))$}\label{subsect-van-est}

In this section we show the non-triviality of the characteristic map
between cohomologies of $U_q(s\ell_2)$ and $\Oc(SL_q(2))$. We compare
the classes we obtain in its image by the classes computed in
\cite{MasuNakaWata90}.

\subsection{The coordinate algebra $\Oc(SL_q(2))$}
Let us begin with the definition of the coordinate algebra $\Oc(SL_q(2))$ of the quantum group $SL_q(2)$. By \cite[Subsect. 4.1.2]{KlimSchm-book}, it is the algebra generated by
\begin{equation}
\bf{t} = \left(\begin{array}{cc}
                 a & b \\
                 c & d
               \end{array}
\right)
\end{equation}
subject to the relations
\begin{align}\label{aux-Oq-relations}
\begin{split}
& ab=qba,\quad ac=qca,\quad ad=da+(q-q^{-1})bc,\\
& bc=cb,\quad bd=qdb,\quad cd=qdc,\quad ad-qbc=1.
\end{split}
\end{align}
The rest of the Hopf algebra structure is given by
\begin{equation}\label{aux-coalg-Oq}
\D({\bf t}) = {\bf t}\ot {\bf t},\quad \ve({\bf t})=1,\quad S({\bf t})=\left(\begin{array}{cc}
                                                                               d & -q^{-1}b \\
                                                                               -qc & a
                                                                             \end{array}
\right).
\end{equation}

Moreover, it is proved in \cite[Thm. 4.21]{KlimSchm-book} that
\begin{equation}\label{aux-Uq-Oq-pairing}
\langle K, a\rangle = q^{-1}, \quad \langle K, d\rangle=q,\quad \langle E,c\rangle=\langle F,b\rangle=1
\end{equation}
determines a non-degenerate pairing between the Hopf algebras $U_q(s\ell_2)$ and $\Oc(SL_q(2))$.

\subsection{The quantum characteristic map}
It is shown in \cite[Thm. 4.14]{KlimSchm-book} that there exists a
unique invariant linear functional $h:\Oc(SL_q(2))\lra k$ such that
$h(1)=1$. This is the Haar functional of $\Oc(SL_q(2))$ as defined in
Subsection~\ref{CQGAlgebras}. This functional satisfies
\begin{equation}\label{aux-inv-Haar-SL2}
((\Id\ot h)\circ \D)(x) = h(x)1= ((h\ot \Id)\circ \D)(x)
\end{equation}
for all $x\in \Oc(SL_q(2))$.  More explicitly, by
\cite[Thm. 4.14]{KlimSchm-book},
\begin{equation}\label{aux-Haar-functional-SL2}
h(a^rb^kc^\ell) = h(b^kc^\ell d^r) = 
\begin{cases}
  0,\, & r\neq 0,\,{\rm or}\,k\neq \ell \\
  (-1)^k\frac{q-q^{-1}}{q^{k+1}-q^{-(k+1)}},\, & r=0,\,{\rm and}\,k=\ell.
\end{cases}
\end{equation}

Furthermore, by \cite[Prop. 4.15]{KlimSchm-book} the Haar functional $h:\Oc(SL_q(2))\lra k$ is not central (a trace), instead
\begin{equation}
h(xy) = h(\vartheta(y)x)
\end{equation}
for all $x,y\in\Oc(SL_q(2))$ where $\vartheta:\Oc(SL_q(2))\lra \Oc(SL_q(2))$ is the automorphism given by \cite[Prop. 4.5]{KlimSchm-book} as
\begin{equation}
\vartheta(a)=q^2 a,\quad \vartheta(b)= b,\quad \vartheta(c)= c,\quad \vartheta(d)=q^{-2} d.
\end{equation}

\begin{lemma}\label{lemma-Haar-vartheta}
  The automorphism $\vartheta:\Oc(SL_q(2))\lra \Oc(SL_q(2))$ can be given by the action of $K^{-1}\in U_q(s\ell_2)$ in the sense that $\vartheta(x) = K^{-1}\rt x\lt K^{-1}$ for any $x\in \Oc(SL_q(2))$.
\end{lemma}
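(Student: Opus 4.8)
The plan is to check the identity $\vartheta(x)=K^{-1}\rt x\lt K^{-1}$ on the four algebra generators $a,b,c,d$ of $\Oc(SL_q(2))$, and then to argue that both sides are algebra endomorphisms so that agreement on generators forces agreement everywhere. For the right-hand side one expands the iterated coregular action: for $u,v\in U_q(s\ell_2)$ and $x\in\Oc(SL_q(2))$ one has $u\rt x\lt v = \langle v, x\ps{1}\rangle\, x\ps{2}\, \langle u, x\ps{3}\rangle$ (up to the paper's ordering conventions), so with $u=v=K^{-1}$ and $x$ a matrix coefficient $t_{ij}$ of the fundamental corepresentation $\mathbf t$, the coproduct $\D(t_{ij})=\sum_k t_{ik}\ot t_{kj}$ iterates to $\D^{(2)}(t_{ij})=\sum_{k,l} t_{ik}\ot t_{kl}\ot t_{lj}$, and the pairing is diagonal on $K$: from \eqref{aux-Uq-Oq-pairing}, $\langle K,a\rangle=q^{-1}$, $\langle K,d\rangle=q$, $\langle K,b\rangle=\langle K,c\rangle=0$ (the last two because $b,c$ are off-diagonal and $K$ is group-like, pairing trivially with $E,F$ only on $c,b$). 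Hence $\langle K^{-1},a\rangle=q$, $\langle K^{-1},d\rangle=q^{-1}$, and $K^{-1}\rt t_{ij}\lt K^{-1}=\langle K^{-1},t_{ii}\rangle\,\langle K^{-1},t_{jj}\rangle\,t_{ij}$, which gives $q\cdot q\cdot a=q^2a$ for $t_{11}=a$, $q\cdot q^{-1}\cdot b=b$ for $t_{12}=b$, $q^{-1}\cdot q\cdot c=c$ for $t_{21}=c$, and $q^{-1}\cdot q^{-1}\cdot d=q^{-2}d$ for $t_{22}=d$. This matches the displayed formula for $\vartheta$ exactly.

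Next I would record the structural reason the computation on generators suffices. The automorphism $\vartheta$ is an algebra map by hypothesis (it is listed as an automorphism in \cite[Prop. 4.5]{KlimSchm-book}). For the other side: since $K^{-1}$ is group-like in $U_q(s\ell_2)$, the coregular action $x\mapsto K^{-1}\rt x$ is an algebra endomorphism of the $U_q(s\ell_2)$-module algebra $\Oc(SL_q(2))$ — indeed for group-like $g$, $g\rt (xy)=\langle g,(xy)\ps{?}\rangle(\cdots)=(g\rt x)(g\rt y)$ because $\D(g)=g\ot g$ makes the relevant Sweedler sum factor — and likewise $x\mapsto x\lt K^{-1}$ is an algebra endomorphism; their composite $x\mapsto K^{-1}\rt x\lt K^{-1}$ is therefore an algebra endomorphism. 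Two algebra endomorphisms of $\Oc(SL_q(2))$ that agree on the generators $a,b,c,d$ agree on all of $\Oc(SL_q(2))$, since those generators generate the algebra; this finishes the proof. One should also note $K^{-1}\rt 1\lt K^{-1}=\ve(K^{-1})\ve(K^{-1})1=1=\vartheta(1)$, so the unit is respected and both maps are genuine algebra endomorphisms (in fact automorphisms, with inverse given by the $K$-action).

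The only mildly delicate point — the \emph{main obstacle}, such as it is — is bookkeeping: making sure the left versus right coregular action conventions and the order of legs in $\D^{(2)}$ are handled consistently with \eqref{modularity} and the pairing normalization \eqref{aux-Uq-Oq-pairing}, so that one genuinely lands on $K^{-1}$ on \emph{both} sides rather than, say, $K^{-1}\rt x\lt K$. Once the conventions are pinned down the verification is a four-line pairing computation, and the algebra-endomorphism argument is routine. I would present the generator check as a short display of the four values and then invoke the endomorphism property in one sentence.
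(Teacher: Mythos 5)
Your proposal is correct and follows essentially the same route as the paper: a generator-by-generator verification of $K^{-1}\rt x\lt K^{-1}$ on $a,b,c,d$ via the pairing \eqref{aux-Uq-Oq-pairing} (the paper writes the same values as $\langle K,S(\cdot)\rangle$ rather than $\langle K^{-1},\cdot\rangle$, which is the same thing), yielding $q^2a,\,b,\,c,\,q^{-2}d$. Your explicit remark that both sides are algebra endomorphisms (since $K^{-1}$ is group-like), so that agreement on generators suffices, is left implicit in the paper but is a correct and harmless addition.
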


\begin{proof}
In view of the pairing \eqref{aux-Uq-Oq-pairing} we have
\begin{align*}
K^{-1}\rt a\lt K^{-1} 
= & \langle K,S(a\ps{1})\rangle a\ps{2} \langle K,S(a\ps{3})\rangle \\
= & \langle K,S(a)\rangle a \langle K,S(a)\rangle 
    + \langle K,S(b)\rangle c \langle K,S(a)\rangle \\
  & + \langle K,S(a)\rangle b \langle K,S(c)\rangle 
    + \langle K,S(b)\rangle d \langle K,S(c)\rangle \\
= & \langle K,d\rangle^2 a = q^2a = \vartheta(a).
\end{align*}
Similarly, we have
\begin{align*}
K^{-1}\rt b\lt K^{-1} 
= & \langle K,S(b\ps{1})\rangle b\ps{2} \langle K,S(b\ps{3})\rangle \\
= & \langle K,S(a)\rangle a \langle K,S(b)\rangle 
    + \langle K,S(b)\rangle c \langle K,S(b)\rangle \\
  & + \langle K,S(a)\rangle b \langle K,S(d)\rangle 
    + \langle K,S(b)\rangle d \langle K,S(d)\rangle \\
= & \langle K,d\rangle \langle K,a\rangle b = b = \vartheta(b),
\end{align*}
and
\begin{align*}
K^{-1}\rt c\lt K^{-1} 
= & \langle K,S(c\ps{1})\rangle c\ps{2} \langle K,S(c\ps{3})\rangle \\
= & \langle K,S(c)\rangle a \langle K,S(a)\rangle 
    + \langle K,S(d)\rangle c \langle K,S(a)\rangle \\
  & + \langle K,S(c)\rangle b \langle K,S(c)\rangle 
    + \langle K,S(d)\rangle d \langle K,S(c)\rangle \\
= & \langle K,a\rangle \langle K,d\rangle c = c = \vartheta(c),
\end{align*}
and finally
\begin{align*}
K^{-1}\rt d\lt K^{-1} 
= & \langle K,S(d\ps{1})\rangle d\ps{2} \langle K,S(d\ps{3})\rangle \\
= & \langle K,S(c)\rangle a \langle K,S(b)\rangle 
    + \langle K,S(d)\rangle c \langle K,S(b)\rangle \\
  & + \langle K,S(c)\rangle b \langle K,S(d)\rangle 
    + \langle K,S(d)\rangle d \langle K,S(d)\rangle \\
= & \langle K,a\rangle^2 d = q^{-2}d = \vartheta(d).
\end{align*}
as we wanted to show.
\end{proof}

\begin{lemma}
The right coregular action of $\s^{-1}=K^{-1}\in U_q(s\ell_2)$ on $\Oc(SL_q(2))$ is an automorphism of $\Oc(SL_q(2))$.
\end{lemma}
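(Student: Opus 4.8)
The plan is to recognise the operator $\Theta\colon\Oc(SL_q(2))\to\Oc(SL_q(2))$, $\Theta(x):=x\lt K^{-1}=x\ps{1}\,\langle K^{-1},x\ps{2}\rangle$, as the right coregular action of a group-like element and then argue abstractly. Since $K$ is group-like in $U_q(s\ell_2)$, so is $K^{-1}=S(K)$; hence $\langle K^{-1},-\rangle\colon\Oc(SL_q(2))\to k$ is an algebra homomorphism with $\langle K^{-1},1\rangle=\ve(K^{-1})=1$. Substituting this into the comultiplicativity identity $\D(xy)=x\ps{1}y\ps{1}\ot x\ps{2}y\ps{2}$ gives at once $\Theta(xy)=x\ps{1}y\ps{1}\langle K^{-1},x\ps{2}\rangle\langle K^{-1},y\ps{2}\rangle=\Theta(x)\Theta(y)$ and $\Theta(1)=1$, so $\Theta$ is a unital algebra endomorphism of $\Oc(SL_q(2))$.

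To finish I would exhibit a two-sided inverse. The candidate is $\Theta'(x):=x\lt K=x\ps{1}\langle K,x\ps{2}\rangle$, which is a unital algebra endomorphism by the same reasoning applied to the group-like $K$. For $\Theta'\circ\Theta=\Id=\Theta\circ\Theta'$ I would use coassociativity to rewrite $(\Theta'\circ\Theta)(x)=x\ps{1}\langle K,x\ps{2}\rangle\langle K^{-1},x\ps{3}\rangle$, regroup the last two legs as the coproduct of a single leg, and invoke the Hopf-pairing axioms together with $KK^{-1}=K^{-1}K=1$ to identify $\langle K,-\rangle\ast\langle K^{-1},-\rangle$ with $\langle 1,-\rangle=\ve$ in the convolution algebra $\Oc(SL_q(2))^\ast$; the counit then collapses the expression to $x$, and the other composite is symmetric. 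Hence $\Theta$ is bijective, and therefore an algebra automorphism. Alternatively, one may combine Lemma~\ref{lemma-Haar-vartheta} with the analogous statement for the left coregular action of $K^{-1}$ to write $\Theta$ as $x\mapsto K\rt\vartheta(x)$, a composite of automorphisms.

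Finally, for concreteness and for later use I would record that $\Theta$ scales the generators: $\Theta(a)=qa$, $\Theta(b)=q^{-1}b$, $\Theta(c)=qc$, $\Theta(d)=q^{-1}d$. The only extra input needed is $\langle K,b\rangle=\langle K,c\rangle=0$, which follows from \eqref{aux-Oq-relations} and $\langle K,-\rangle$ being a character (apply it to $ab=qba$ and $ac=qca$ and use $q^2\neq1$); then $\langle K^{-1},a\rangle=\langle K,S(a)\rangle=\langle K,d\rangle=q$, $\langle K^{-1},d\rangle=q^{-1}$ and $\langle K^{-1},b\rangle=\langle K^{-1},c\rangle=0$ by \eqref{aux-Uq-Oq-pairing}, and substituting into $\Theta$ via $\D(a)=a\ot a+b\ot c$, $\D(b)=a\ot b+b\ot d$, $\D(c)=c\ot a+d\ot c$, $\D(d)=c\ot b+d\ot d$ yields the claim, which makes automorphy transparent on its own. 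I do not foresee a genuine obstacle: the statement is essentially the observation that the coregular action by an (invertible) group-like element is an algebra automorphism, and the only care needed is the Sweedler-notation bookkeeping in the composite $\Theta'\circ\Theta$.
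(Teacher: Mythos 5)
Your proof is correct and follows essentially the same route as the paper: the paper's own argument is exactly your first paragraph, namely that $f\mapsto f\lt\s^{-1}$ is multiplicative because pairing against the group-like $\s^{-1}=K^{-1}$ is an algebra character, with bijectivity (inverse given by the action of $K=\s$) left implicit, which you spell out via convolution inverses. One caveat on your optional addendum: in this paper's conventions the right coregular action evaluates $K^{-1}$ on the \emph{first} Sweedler leg, $f\lt K^{-1}=\langle f\ps{1},K^{-1}\rangle f\ps{2}$ (so that $(f\lt u)(x)=f(ux)$, cf.\ the proofs of Theorem~\ref{MainResult1} and Lemma~\ref{lemma-Haar-vartheta}), whereas your $\Theta$ evaluates on the second leg and is therefore the paper's \emph{left} coregular action; this affects neither the statement nor the argument, but with the paper's convention the explicit scalars on generators come out as $a\mapsto qa$, $b\mapsto qb$, $c\mapsto q^{-1}c$, $d\mapsto q^{-1}d$ rather than your $b\mapsto q^{-1}b$, $c\mapsto qc$.
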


\begin{proof}
We have
\begin{align*}
(fg)\lt \s^{-1} 
= & \langle(fg)\ps{1},\s^{-1}\rangle (fg)\ps{2} \\
= & \langle f\ps{1}g\ps{1},\s^{-1}\rangle f\ps{2}g\ps{2}\\
= & \langle f\ps{1},\s^{-1}\rangle \langle g\ps{1},\s^{-1}\rangle f\ps{2}g\ps{2} \\
= & (f\lt \s^{-1})(g \lt \s^{-1}).
\end{align*}
\end{proof}

\begin{lemma}
The Haar functional $h:\Oc(SL_q(2))\lra k$ is $\ve$-invariant with respect to the left coregular action of $U_q(s\ell_2)$ on $\Oc(SL_q(2))$.
\end{lemma}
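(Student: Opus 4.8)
The plan is to obtain this as an immediate translation of the left-invariance \eqref{aux-inv-Haar-SL2} of the Haar functional along the Hopf pairing \eqref{aux-Uq-Oq-pairing} between $U_q(s\ell_2)$ and $\Oc(SL_q(2))$. Recall that the left coregular action used in the preceding lemmas is $y\rt f = f\ps{1}\langle y, f\ps{2}\rangle$ for $y\in U_q(s\ell_2)$ and $f\in\Oc(SL_q(2))$; this is precisely the convention underlying Lemma~\ref{lemma-Haar-vartheta}, where $K^{-1}\rt x\lt K^{-1}$ is computed by pairing $K^{-1}$ against the outer legs of $x\ps{1}\ot x\ps{2}\ot x\ps{3}$. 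Writing out $h(y\rt f)$ and pulling the scalars out of the argument of $h$ and out of the pairing, I first get
\begin{align*}
h(y\rt f) = h\big(f\ps{1}\big)\,\langle y, f\ps{2}\rangle = \big\langle y,\ h(f\ps{1})\,f\ps{2}\big\rangle = \big\langle y,\ (h\ot\Id)\D(f)\big\rangle ,
\end{align*}
using nothing but linearity of $h$ and of the pairing in each variable.

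The decisive step is then to invoke \eqref{aux-inv-Haar-SL2}, which gives $(h\ot\Id)\D(f) = h(f)\,1$; combining this with the Hopf-pairing axiom $\langle y, 1\rangle = \ve(y)$ yields
\begin{align*}
h(y\rt f) = \big\langle y,\ h(f)\,1\big\rangle = h(f)\,\langle y, 1\rangle = \ve(y)\,h(f) .
\end{align*}
Since $(\ve, K_{2\rho})$ is the MPI of $U_q(s\ell_2)$ in play, this is exactly the $\ve$-invariance of $h$ under the left coregular action in the sense of the Connes--Moscovici set-up recalled above, which is the assertion of the lemma.

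I do not expect any genuine obstacle here: the whole argument is a one-line unwinding of the invariance of the Haar functional, and the only points requiring care are purely formal, namely keeping track of which tensor leg the left coregular action pairs against (pinned down by consistency with Lemma~\ref{lemma-Haar-vartheta}) and the identity $\langle y, 1\rangle = \ve(y)$. I would also note that neither the explicit values \eqref{aux-Haar-functional-SL2} of $h$ nor any feature special to $\Oc(SL_q(2))$ enters: the same three lines show that the Haar functional of an arbitrary CQG algebra $\Oc(G_q)$ is $\ve$-invariant for the left coregular action of $U_q(\Fg)$, which is the quantum counterpart of the classical computation $h(X\rt f) = 0 = \d(X)h(f)$ carried out in Subsection~\ref{ClassicalVanEst}.
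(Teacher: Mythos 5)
Your proof is correct and is essentially identical to the paper's: both write $h(y(f))=h(f\ps{1})\langle y,f\ps{2}\rangle$, apply the invariance \eqref{aux-inv-Haar-SL2} in the form $(h\ot\Id)\D(f)=h(f)1$, and finish with $\langle y,1\rangle=\ve(y)$. Your closing remark that the argument works verbatim for any CQG algebra is a harmless (and accurate) aside beyond what the paper states.
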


\begin{proof}
Via the invariance of \eqref{aux-inv-Haar-SL2}, for any $y\in U_q(s\ell_2)$ and $f\in \Oc(SL_q(2))$ we have
\begin{equation*}
h(y(f))=h(f\ps{1})\langle y, f\ps{2}\rangle = h(f)\langle y, 1\rangle = \ve(y)h(f).
\end{equation*}
\end{proof}

As a result, using Theorem~\ref{MainResult1} we get the following.
\begin{corollary}
For the Hopf algebra $U_q(s\ell_2)$ with the modular pair $(\ve,\s)$ in involution, the Haar functional $h:\Oc(SL_q(2))\lra k$ determines a characteristic homomorphism
\begin{align}\label{aux-char-homom-SL2}
\begin{split}
& \chi_q:HC^\ast(U_q(s\ell_2), {}^{\s} k) \lra HC_{\s^{-1}}^\ast(\Oc(SL_q(2))),\\
& \chi_q(y^1,\ldots,y^n)(f^0,\ldots,f^n):=h(f^0y^1(f^1)\ldots y^n(f^n)),
\end{split}
\end{align}
where for any $x,y\in U_q(s\ell_2)$ and $f\in \Oc(SL_q(2))$, $y(f)(x):=f(xy)$ is the left coregular action.
\end{corollary}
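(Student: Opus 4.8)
The plan is to obtain the statement as a direct specialization of Theorem~\ref{MainResult1} to $\Fg = s\ell_2$ and $\Ac = \Oc(SL_q(2))$, once the abstract data of that theorem is matched with the explicit structures recorded in this subsection. First I would observe that for $\Fg = s\ell_2$ the half-sum of positive roots is $\rho = \tfrac12\a$, with $\a$ the unique positive root, so that $K_{2\rho} = K$; hence the group-like element of the modular pair in involution $(\ve,\s)$ supplied by \cite[Prop. 6.1.6]{KlimSchm-book} is precisely $\s = K$, and the twist appearing in the target of \eqref{aux-char-homom-q} is the right coregular action of $\s^{-1} = K^{-1}$.

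Next I would assemble the hypotheses used in the proof of Theorem~\ref{MainResult1}. The non-degenerate pairing \eqref{aux-Uq-Oq-pairing} makes $\Oc(SL_q(2))$ a left $U_q(s\ell_2)$-module coalgebra via the left coregular action $y(f)(x) = f(xy)$, and the second of the preceding lemmas shows that the right coregular action of $\s^{-1}$ is an algebra automorphism, so that the twisted cyclic complex computing $HC^\ast_{\s^{-1}}(\Oc(SL_q(2)))$ is defined. The Haar functional $h$ exists and is normalized by $h(1)=1$ \cite[Thm. 4.14]{KlimSchm-book}; it is $\ve$-invariant under the left coregular action by the third of the preceding lemmas (whose proof uses only \eqref{aux-inv-Haar-SL2}); and by \cite[Prop. 4.15]{KlimSchm-book} together with Lemma~\ref{lemma-Haar-vartheta} it satisfies $h(xy) = h(\vartheta(y)x)$ with $\vartheta(y) = \s^{-1}\rt y\lt\s^{-1}$. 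Substituting $y = \s\rt a\lt\s$ in this identity and using that $\s$ is group-like — so the coregular actions of $\s$ and of $\s^{-1}$ are mutually inverse, and $\vartheta(\s\rt a\lt\s) = a$ — yields $h(ax) = h(x(\s\rt a\lt\s))$, which is exactly the modularity property \eqref{modularity} invoked in the proof of Theorem~\ref{MainResult1}. With this dictionary in place, the verifications that $\chi_q$ intertwines the Hopf-cyclic cofaces, codegeneracies and cyclic operator of $C^\ast(U_q(s\ell_2),{}^\s k)$ with the ($\s^{-1}$-twisted) structure maps of $C^\ast_{\s^{-1}}(\Oc(SL_q(2)))$ are verbatim the computations of that proof, now read with $\Ac = \Oc(SL_q(2))$; hence $\chi_q$ descends to cohomology.

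The only genuine work is the bookkeeping just indicated: reconciling the convention of \cite[Prop. 4.15]{KlimSchm-book}, where the modular automorphism $\vartheta$ is applied to the left-hand factor, with the form \eqref{modularity} used in Theorem~\ref{MainResult1}, and confirming — this is the content of Lemma~\ref{lemma-Haar-vartheta} — that the intrinsically defined $\vartheta$ is implemented by the two-sided coregular action of $\s^{-1}$. A secondary point worth a remark is that $\Oc(SL_q(2))$ is, strictly speaking, the Hopf algebra underlying the CQG $\ast$-algebra $\Oc(SU_q(2))$ rather than a CQG algebra on the nose; but the Haar functionals coincide and the proof of Theorem~\ref{MainResult1} uses only the pairing and the modularity \eqref{modularity}, so this entails no loss. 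No further ingredients are needed.
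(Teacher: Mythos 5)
Your proposal is correct and matches the paper's own argument, which obtains the corollary by specializing Theorem~\ref{MainResult1} to $\Fg=s\ell_2$ (so $\s=K_{2\rho}=K$) and $\Ac=\Oc(SL_q(2))$, with Lemma~\ref{lemma-Haar-vartheta} and the two lemmas following it supplying exactly the ingredients you list: the modularity property \eqref{modularity} via $\vartheta(x)=\s^{-1}\rt x\lt\s^{-1}$, the fact that the right coregular action of $\s^{-1}$ is an algebra automorphism, and the $\ve$-invariance of $h$. The only quibble is terminological: the left coregular action makes $\Oc(SL_q(2))$ a left $U_q(s\ell_2)$-module \emph{algebra}, not a module coalgebra, and it is this property that the coface compatibilities in the proof of Theorem~\ref{MainResult1} actually use.
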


Combining with Corollary~\ref{MainResult2}, we obtain the following result.
\begin{corollary}
For the Hopf algebra $U_q(s\ell_2)$ with the modular pair $(\ve,\s)$ in involution, the Haar functional $h:\Oc(SL_q(2))\lra k$ determines a characteristic homomorphism
\begin{align}\label{aux-char-homom-SL-2}
\widetilde{\chi}_q:HC^\ast(U_q(s\ell_2), {}^{\s} k) \lra HC^{\ast+1}(\Oc(SL_q(2))).
\end{align}
\end{corollary}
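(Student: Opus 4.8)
The plan is to realize $\widetilde{\chi}_q$ as the composite of the twisted characteristic map $\chi_q$ of~\eqref{aux-char-homom-SL2} with the untwisting cup product of Subsection~\ref{Untwisting}; equivalently, $\widetilde{\chi}_q$ is the $\Fg=s\ell_2$, $\Ac=\Oc(SL_q(2))$ case of Corollary~\ref{MainResult2}. Since $\Oc(SL_q(2))$ is not literally a CQG algebra one cannot simply quote that corollary, but its proof applies verbatim once the preceding corollary --- which re-established $\chi_q$ for $\Oc(SL_q(2))$ directly from the explicit modularity of the Haar functional recorded in~\cite[Prop.~4.15]{KlimSchm-book} --- is available.

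First I would recall from the preceding corollary that $\chi_q\colon HC^\ast(U_q(s\ell_2),{}^{\s}k)\lra HC^\ast_{\s^{-1}}(\Oc(SL_q(2)))$, the twisting automorphism of the target being the one supplied by Lemma~\ref{lemma-Haar-vartheta} and the two lemmas following it. In particular this twist is implemented by the coregular action of the group-like $\s^{-1}=K^{-1}\in U_q(s\ell_2)$ and is therefore an honest algebra automorphism of $\Oc(SL_q(2))$, so that $\Oc(SL_q(2))$ becomes a module algebra over the Hopf algebra of Laurent polynomials in this automorphism. The cup product~\eqref{cup} then specializes, exactly as in Subsection~\ref{Untwisting}, to a pairing of $HC^n_{\s^{-1}}(\Oc(SL_q(2)))$ with the one-dimensional Hopf-cyclic group $HC^1$ of this Laurent polynomial Hopf algebra, and cupping with the canonical generator of that $HC^1$ yields the untwisting morphism $\chi\colon HC^n_{\s^{-1}}(\Oc(SL_q(2)))\lra HC^{n+1}(\Oc(SL_q(2)))$, described on the Hochschild level by a formula analogous to~\eqref{aux-char-map-untwist-hoch-level}.

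I would then set $\widetilde{\chi}_q:=\chi\circ\chi_q$. Both factors are morphisms in cohomology --- $\chi_q$ by Theorem~\ref{MainResult1} together with the preceding corollary, and $\chi$ because it is induced by the cup product~\eqref{cup} --- so the composite is a well-defined morphism $HC^\ast(U_q(s\ell_2),{}^{\s}k)\lra HC^{\ast+1}(\Oc(SL_q(2)))$, the unit degree shift being contributed entirely by $\chi$. The only point that is not purely formal is the identification of the twist in the range of $\chi_q$ with a genuine algebra automorphism of the (non-compact) algebra $\Oc(SL_q(2))$, so that the Laurent polynomial Hopf algebra really acts; but this has already been arranged in Lemma~\ref{lemma-Haar-vartheta} and the lemmas after it, so I anticipate no further obstacle, the statement being a direct specialization of Corollary~\ref{MainResult2}.
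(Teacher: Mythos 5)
Your proposal is correct and follows the paper's own route: the paper likewise obtains $\widetilde{\chi}_q$ by combining the preceding corollary (the twisted map $\chi_q$ of~\eqref{aux-char-homom-SL2}, justified via the lemmas on the Haar functional and the coregular action of $K^{-1}$) with the untwisting cup product of Subsection~\ref{Untwisting}, i.e.\ with Corollary~\ref{MainResult2}, the degree shift coming from cupping with the generator of $HC^1(k[\s,\s^{-1}],{}^{\s^{-1}}k)$. Your extra care about whether $\Oc(SL_q(2))$ is literally a CQG algebra is a reasonable remark but does not change the argument, since the needed inputs are exactly those verified in the preceding lemmas.
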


\subsection{The non-triviality of the quantum characteristic map}
In order to discuss the non-triviality of the characteristic
homomorphism \eqref{aux-char-homom-SL2} we recall the results of
\cite{MasuNakaWata90}.  First define
\begin{equation*}
d(t):=\left\{\begin{array}{cc}
               d^t   & \text{ if } t\geq 0 \\
               a^{-t} & \text{ if } t<0,
             \end{array}
\right.
\end{equation*}
and
\begin{equation*}
(x;q)_n:=(1-x)(1-qx)\cdots(1-q^{n-1}x).
\end{equation*}
In \cite{MasuNakaWata90} it is calculated that
\begin{align}\label{aux-MasuNakaWata-results} 
HC^n(\Oc(SL_q(2))) = &
    \begin{cases}
     k[\tau_{\rm even}]  
     \oplus \displaystyle \bigoplus_{i,j,k,\ell>0} k[\tau_a^i] 
     \oplus k[\tau_b^j] 
     \oplus k[\tau_c^k] 
     \oplus k[\tau_d^\ell] & \text{ if } n=0\\
     k S^{\lfloor n/2 \rfloor}[\tau_{\rm even}]  & \text{ if $n>0$ is even}\\
     k S^{\lfloor n/2 \rfloor}[\tau_{\rm odd}]   & \text{ if $n$ is odd}
  \end{cases}
\end{align}
where
\begin{align}
\begin{split}
& \tau_a^l(d(t)b^mc^n) = \d_{t,-l}\d_{m,0}\d_{n,0},\\
& \tau_b^l(d(t)b^mc^n) =\d_{t,0}\d_{m-n,l}\frac{q^l-1}{q^{l+2n}-1}(-q)^n,\\
& \tau_c^l(d(t)b^mc^n) =\d_{t,0}\d_{n-m,l}\frac{q^l-1}{q^{l+2m}-1}(-q)^m,\\
& \tau_d^l(d(t)b^mc^n) =\d_{t,l}\d_{m,0}\d_{n,0},\\
& \tau_{\rm even}(d(t)b^mc^n) = \d_{t,0}\d_{m,0}\d_{n,0},
\end{split}
\end{align}
and finally
\begin{align}
\begin{split}
\tau_{\rm odd}(d(t)b^mc^n, d(\widetilde{t})b^{\widetilde{m}}c^{\widetilde{n}}) 
= & 0 \qquad \text{ if } t+\widetilde{t}\neq 0,\\
\tau_{\rm odd}(a^tb^mc^n, d^{\widetilde{t}}b^{\widetilde{m}}c^{\widetilde{n}}) 
= & \tau_{\rm odd}(d^tb^mc^n, a^{\widetilde{t}}b^{\widetilde{m}}c^{\widetilde{n}}) \\
= & (n-m)(-q)^{n+\widetilde{n}}q^{t(\widetilde{m}+\widetilde{n})}\frac{(q^2;q^2)_t}{(q^{2(n+\widetilde{n})};q^2)_{t+1}}\d_{t,\widetilde{t}}
\d_{m+\widetilde{m},n+\widetilde{n}}.
\end{split}
\end{align}

We are now ready to compute the images, under the characteristic homomorphism \eqref{aux-char-homom-SL-2}, of the Hopf-cyclic classes \eqref{aux-Hopf-cyclic-class-Uq-sl2}.

\begin{proposition}\label{MNW1}
The classes $[\widetilde{\chi}_q(E)],[\widetilde{\chi}_q(KF)]\in HC^2(\Oc(SL_q(2)))$ are nontrivial.
\end{proposition}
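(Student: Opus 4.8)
The plan is to compute $\widetilde{\chi}_q(E)$ and $\widetilde{\chi}_q(KF)$ explicitly as functionals on $\Oc(SL_q(2))^{\ot 3}$, evaluate them on a small set of test elements built from the PBW monomials $d(t)b^mc^n$, and exhibit a pairing with the cyclic homology classes of \cite{MasuNakaWata90} that is nonzero. Since $HC^2(\Oc(SL_q(2)))$ is one-dimensional by \eqref{aux-MasuNakaWata-results} — generated by $S[\tau_{\rm even}]$ — it suffices to show that the image of each class is not in the image of the $b$-coboundary and is not killed by the $S$-operator, equivalently that it pairs nontrivially against the corresponding generator of cyclic \emph{homology} $HC_2(\Oc(SL_q(2)))$. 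Concretely, I would first unwind $\widetilde{\chi}_q = \chi \circ \chi_q$: by \eqref{aux-char-homom-SL2} and \eqref{aux-char-map-untwist-hoch-level}, on the Hochschild level
\[
\widetilde{\chi}_q(y)(f^0,f^1,f^2) = h\bigl(f^0\,y\bigl((1-\s^{-1})(f^1)\bigr)\,f^2\bigr),
\]
where $y\in\{E,KF\}$, $\s^{-1}=K^{-1}$ acts by the right coregular action, and the left coregular actions of $E$ and $KF$ on generators are read off from the pairing \eqref{aux-Uq-Oq-pairing}.

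Next I would record the relevant coregular actions. From \eqref{aux-Uq-Oq-pairing} and the coproduct \eqref{aux-coalg-Oq} one computes $E(b) = \langle E, b\ps 1\rangle b\ps 2$-type expressions; in particular $E$ raises $c$ to $d$-weight pieces and $F$ lowers $b$, while $K^{-1}$ acts diagonally on the basis monomials with an explicit power of $q$. The upshot is that $(1-K^{-1})$ applied to a $d(t)b^mc^n$ monomial returns $(1-q^{\text{wt}})\,d(t)b^mc^n$, so the untwisting factor is a nonzero scalar precisely on the weight-nonzero monomials — this is exactly the mechanism by which the degree shift produces something detectable rather than zero. I would then choose explicit triples $(f^0,f^1,f^2)$ — natural candidates are built from $a,b,c,d$ and the central element, e.g.\ involving $b$ and $c$ so that the Haar functional \eqref{aux-Haar-functional-SL2} is nonzero — push them through $\widetilde\chi_q(E)$ and $\widetilde\chi_q(KF)$, and evaluate via \eqref{aux-Haar-functional-SL2}.

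The cohomology-level (cyclic, not just Hochschild) computation then requires the shuffle formula from Subsection~\ref{Untwisting}: one antisymmetrizes over $Sh(1,2)$. In practice I expect it to be cleanest to pair the resulting cyclic cocycle directly against the homology generator of $HC_2(\Oc(SL_q(2)))$ coming from \cite{MasuNakaWata90} — that is, find a cyclic $2$-cycle $z\in\Oc(SL_q(2))^{\ot 3}$ (provided by their explicit resolution) representing the generator dual to $S[\tau_{\rm even}]$, and check $\langle \widetilde\chi_q(E), z\rangle \neq 0$, $\langle \widetilde\chi_q(KF), z\rangle\neq 0$. Because both classes land in the same one-dimensional space, the computation also shows they are scalar multiples of one another; nonvanishing of either scalar gives the claim. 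If the shuffle-antisymmetrization is cumbersome, an alternative is to argue indirectly: the composite $\widetilde\chi_q$ factors the quantum van~Est map, and one can use that $[E],[KF]$ generate $HH^1(U_q(s\ell_2),{}^\s k)$ together with the known non-degeneracy of the cup product pairing in \cite{KaygSutl14}, reducing nonvanishing in the target to nonvanishing of a single Haar-functional evaluation.

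The main obstacle will be bookkeeping: correctly expanding the iterated coproducts of the generators (the coproduct of $\bf t$ is matrix-like, so $\D^{(2)}(b)$, etc., have several terms), applying the coregular actions of $E$, $F$, $K$ in the right order (note the \emph{right} action of $K^{-1}$ for untwisting versus the \emph{left} action of $E,KF$), and then matching the outcome against the normalization conventions in \cite{MasuNakaWata90} for $\tau_{\rm even}$ and the $S$-operator. I would keep the test monomials as simple as possible (degree at most two in $b,c$) to keep the Haar evaluations to the single nonzero case $k=\ell$ in \eqref{aux-Haar-functional-SL2}, and double-check the sign from $(-1)^\mu$ in the shuffle sum. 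Once a single nonzero value is produced, nontriviality in $HC^2$ is immediate since the target is one-dimensional and $\widetilde\chi_q$ is a chain map by Theorem~\ref{MainResult1} and Corollary~\ref{MainResult2}.
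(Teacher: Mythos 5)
Your overall strategy is the one the paper follows: make $\widetilde\chi_q(E)$ and $\widetilde\chi_q(KF)$ explicit through the untwisting cup product and the Haar functional, evaluate on concrete elements of $\Oc(SL_q(2))^{\ot 3}$, and read off nonvanishing from \eqref{aux-Haar-functional-SL2} together with the Masuda--Nakagami--Watanabe description of $HC^2(\Oc(SL_q(2)))$. One step as written would fail, however: your Hochschild-level formula puts the $U_q(s\ell_2)$-action in the wrong slot. With the convention of Subsection~\ref{Untwisting}, composing \eqref{aux-char-map-untwist-hoch-level} with \eqref{aux-char-homom-SL2} gives
\[
\widetilde\chi_q(y)(f^0,f^1,f^2)=\chi_q(y)\bigl(f^0\,(1-\s^{-1})(f^1),\,f^2\bigr)=h\bigl(f^0\,(1-\s^{-1})(f^1)\,y(f^2)\bigr),
\]
that is, the coregular action of $y\in\{E,KF\}$ lands on the last argument, not on $(1-\s^{-1})(f^1)$ with $f^2$ left untouched; if you carry your version through the shuffle expansion, the numbers you evaluate belong to a different functional, so the slip has to be fixed before any test evaluation is meaningful.

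Apart from this, the plan is heavier than necessary at one point: you do not need to extract from the MNW resolution an explicit cyclic $2$-cycle representing the homology class dual to $S[\tau_{\rm even}]$. To rule out $\widetilde\chi_q(E)=b\vp$ for a cyclic $1$-cochain $\vp$ it suffices to evaluate on any element of $\Oc(SL_q(2))^{\ot 3}$ annihilated by all such coboundaries, and the paper takes the ad hoc elements $\omega=b\ot c^2\ot a-a\ot b\ot c^2$ for $E$ and $c\ot b^2\ot d-d\ot c\ot b^2$ for $KF$: the antisymmetry $\vp(x,y)=-\vp(y,x)$ of cyclic $1$-cochains together with $bc=cb$ gives $b\vp(\omega)=0$, while the three-term cyclic-level formula and \eqref{aux-Haar-functional-SL2} yield $\widetilde\chi_q(E)(\omega)=(2q^{-2}+q^{-1}-1)\frac{q-q^{-1}}{q^{3}-q^{-3}}\neq 0$. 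This sidesteps both the homology bookkeeping and the normalization matching you anticipate; the identification of the image with $S[\tau_{\rm even}]$, which your plan builds in from the start, is only needed afterwards (Corollary~\ref{MNW2}) and follows from the one-dimensionality of $HC^2(\Oc(SL_q(2)))$ exactly as you say.
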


\begin{proof}
By the definition \eqref{aux-Haar-functional-SL2} of the Haar functional $h:\Oc(SL_q(2))\lra k$, we have
\begin{align*}
\begin{split}
&\widetilde{\chi}_q(E)(x_0, x_1, x_2) = \\
& -\chi_q(E)(x_0(1-\s)(x_1),\,x_2) - \chi_q(E)(x_0x_1,\,(1-\s)(x_2)) + \chi_q(E)(x_0,\,(1-\s)(x_1)x_2) \\
& =h(x_0(1-\s)(x_1)E(x_2)) - h(x_0x_1E(1-\s)(x_2)) + h(x_0E((1-\s)(x_1)x_2)). \\
\end{split}
\end{align*}
We consider the element $\omega=b\ot c^2\ot a - a \ot b \ot c^2\in \Oc(SL_q(2))^{\ot\,3}$ on which any coboundary vanishes, that is, for any cyclic 1-cocycle $\vp:\Oc(SL_q(2))^{\ot\,2}\lra k$,
\begin{equation*}
b\vp(\omega) = \vp(bc^2\ot a) - \vp(b\ot c^2a) + \vp(ab\ot c^2) - \vp(ab\ot c^2) + \vp(a\ot bc^2) - \vp(c^2a\ot b) = 0.
\end{equation*}
Hence it follows from
\begin{align*}
 &\widetilde{\chi}_q(E)(\omega) = h(b(1-\s)(c^2)E(a)) - h(bc^2E(1-\s)(a)) + h(bE((1-\s)(c^2)a)) \\
 & = (2q^{-2}+q^{-1}-1)h(b^2c^2) = (2q^{-2}+q^{-1}-1)\frac{q-q^{-1}}{q^3-q^{-3}} \neq 0
\end{align*}
that $[\widetilde{\chi}_q(E)]\neq 0$. Using the element $\omega=c\ot b^2\ot d - d \ot c \ot b^2\in \Oc(SL_q(2))^{\ot\,3}$, we similarly arrive at $[\widetilde{\chi}_q(KF)]\neq 0$.
\end{proof}

In view of \eqref{aux-MasuNakaWata-results} we conclude the following.

\begin{corollary}\label{MNW2}
We have $[\widetilde{\chi}_q(KF)]=[\widetilde{\chi}_q(E)] = [S(\tau_{\rm even})]\in HC^2(\Oc(SL_q(2)))$.
\end{corollary}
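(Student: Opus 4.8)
The plan is to show that the two nonzero classes $[\widetilde{\chi}_q(E)]$ and $[\widetilde{\chi}_q(KF)]$ in $HC^2(\Oc(SL_q(2)))$ coincide with (a suspension of) the class $[\tau_{\rm even}]$, using the structure of the cyclic cohomology of $\Oc(SL_q(2))$ recorded in \eqref{aux-MasuNakaWata-results}. First I would invoke that display in degree $n=2$: since $2$ is even and positive, $HC^2(\Oc(SL_q(2)))$ is one-dimensional, spanned by $S[\tau_{\rm even}]$ (the suspension of the degree-zero class $\tau_{\rm even}$). This is the crucial structural input, and it immediately forces both $[\widetilde{\chi}_q(E)]$ and $[\widetilde{\chi}_q(KF)]$ to be scalar multiples of $S[\tau_{\rm even}]$.

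Next, from Proposition~\ref{MNW1} we already know that neither of these multiples is zero: the computations there produce the nonzero values $(2q^{-2}+q^{-1}-1)\tfrac{q-q^{-1}}{q^3-q^{-3}}$ (and the analogous nonzero scalar for $KF$) when evaluating on the explicit cycles $\om = b\ot c^2\ot a - a\ot b\ot c^2$ and $\om = c\ot b^2\ot d - d\ot c\ot b^2$. So in the one-dimensional space $HC^2(\Oc(SL_q(2)))$ we have $[\widetilde{\chi}_q(E)] = \lambda_E\, S[\tau_{\rm even}]$ and $[\widetilde{\chi}_q(KF)] = \lambda_{KF}\, S[\tau_{\rm even}]$ with $\lambda_E, \lambda_{KF}\in k^\times$. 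Finally, the statement to be proved asserts that all three classes are literally equal, i.e.\ $\lambda_E = \lambda_{KF} = 1$; I would obtain this by a normalization argument, evaluating $S(\tau_{\rm even})$ against the same test cycles $\om$ and comparing. Concretely, one computes $\langle S(\tau_{\rm even}), \om\rangle$ directly from the suspension formula (the $B$-$b$ bicomplex description of $S$) and from the defining formula $\tau_{\rm even}(d(t)b^mc^n)=\d_{t,0}\d_{m,0}\d_{n,0}$; matching this value with the scalar found in Proposition~\ref{MNW1} pins down the coefficient. The same check with the second test cycle $\om = c\ot b^2\ot d - d\ot c\ot b^2$ handles $KF$.

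The main obstacle is the explicit evaluation of the suspended class $S(\tau_{\rm even})$ on the degree-$2$ cycle $\om$: one must unwind the periodicity operator $S:HC^0\to HC^2$ at the chain level (via a representative in the $(b,B)$-bicomplex), which involves the Connes $B$ operator and the codegeneracies, and then evaluate carefully on the product monomials appearing in $\om$, using the commutation relations \eqref{aux-Oq-relations} to reduce everything to the normal-form basis $d(t)b^mc^n$ on which $\tau_{\rm even}$ is given. Provided this bookkeeping is carried out so that the scalar comes out to exactly the value in Proposition~\ref{MNW1}, the equality $[\widetilde{\chi}_q(E)]=[\widetilde{\chi}_q(KF)]=[S(\tau_{\rm even})]$ follows; if instead one only wishes to record that these are the \emph{same nonzero multiple} of the generator, one may alternatively rescale the chosen generator $\tau_{\rm even}$, but as stated the corollary is the precise normalized identity, so the suspension computation is the step that cannot be skipped.
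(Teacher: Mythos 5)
Your argument is essentially the paper's: the proof there consists precisely of citing \eqref{aux-MasuNakaWata-results} to see that $HC^2(\Oc(SL_q(2)))$ is the line spanned by $S[\tau_{\rm even}]$ and invoking the nonvanishing from Proposition~\ref{MNW1}, so that $[\widetilde{\chi}_q(E)]$ and $[\widetilde{\chi}_q(KF)]$ are forced to be nonzero elements of that line. The normalization step you single out as unskippable (evaluating a chain-level representative of $S(\tau_{\rm even})$ on the test cycles to pin both scalars to $1$) is in fact not carried out in the paper either, whose stated equality is to be read as an identification of generators of the one-dimensional space up to nonzero scalar; so your core argument coincides with the paper's, and your caveat merely makes explicit a normalization the paper leaves implicit.
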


\section{The $q$-Index Cocycle for the Standard Podleś Sphere}\label{quantumHomogeneousSpaces}

In this section we discuss the equivariant generalization of \eqref{aux-char-homom-SL2}, and we capture the Schmüdgen-Wagner index cocycle of \cite{SchmWagn04} in the image of the equivariant characteristic map.

\subsection{QUE algebra $U_q(su_2)$ and CQG algebra $\Oc(SU_q(2))$}
Let $\Oc(SU_q(2))$ be the coordinate Hopf algebra of the compact
quantum group $SU_q(2)$, see \cite[Subsect. 4.1.4]{KlimSchm-book}. Following the
notation of \cite{SchmWagn04}, let also $U_q(su_2)$ be the Hopf algebra
generated by $E,F,K,K^{-1}$ subject to the relations
\begin{equation*}
KK^{-1}=K^{-1}K=1, \quad KE=qEK,\quad FK=qKF,\quad EF-FE=\frac{K^2-K^{-2}}{q-q^{-1}},
\end{equation*}
whose Hopf algebra structure is given by
\begin{align*}
  & \D(K)=K\ot K,\quad \D(K^{-1})=K^{-1}\ot K^{-1}, \\
  & \D(E)=E\ot K + K^{-1}\ot E,\quad \D(F)=F\ot K + K^{-1}\ot F, \\
  & \ve(K)=\ve(K^{-1})=1,\quad \ve(E)=\ve(F)=0,\\
  & S(K)=K^{-1},\quad S(E)=-qE,\quad S(F)=-q^{-1}F.
\end{align*}
We note that, in the terminology of \cite{KlimSchm-book}, it is the Hopf algebra $\breve{U}_q(s\ell_2)$.

We also note that the non-degenerate pairing between the Hopf algebras
$U_q(su_2)$ and $\Oc(SU_q(2))$ is given by
\begin{equation*}
\langle K^{\pm 1}, d\rangle =\langle K^{\mp 1}, a\rangle = q^{\pm 1/2}, \quad \langle E,c\rangle = \langle F,b\rangle=1.
\end{equation*}
Then, $\Oc(SU_q(2))$ is a left (and a right) $U_q(su_2)$-module algebra via the coregular action.

\subsection{The standard Podleś Sphere}
The coordinate $\ast$-algebra $\Oc(S_q^2)$ of the standard Podleś sphere \cite{Podl87} is the unital $\ast$-algebra with three generators $A=A^\ast,B,B^\ast$ with the relations
\begin{equation*}
BA=q^2AB,\quad AB^\ast=q^2B^\ast A,\quad B^\ast B=A-A^2,\quad BB^\ast=q^2A-q^4A^2.
\end{equation*}
It is also possible to view it as the $K$-invariant subalgebra
\begin{equation*}
\Oc(S_q^2)=\{x\in \Oc(SU_q(2))\mid x\lt K = x\}
\end{equation*}
of $\Oc(SU_q(2))$. We also recall from \cite{SchmWagn04} that for the Haar state $h$ on $\Oc(SU_q(2))$, we have $h(xy)=h((K^{-2}(y)\lt K^{-2})x)$ for any $x,y\in\Oc(SU_q(2))$. Hence, for any $x,y\in\Oc(S_q^2)$ we have $h(xy)=h(\s(y)x)$ with $\s=K^{-2}$.

\subsection{Equivariant Hopf-cyclic cohomology and its actions}
Let us also recall from \cite[Lemma 4.1]{SchmWagn04} that
\begin{equation*}
h(R_F(x)R_E(y)) = q^2h(R_E(x)R_F(y)), 
\end{equation*}
for all $x,y\in \Oc(S_q^2)$.  As a result, the functional
$\tau:\Oc(S_q^2)^{\ot\,3}\lra k$, defined for all $x,y,z\in \Oc(S_q^2)$ as
\begin{align*}
\tau(x,y,z):=h(xR_F(y)R_E(z)-q^2xR_E(y)R_F(z)),
\end{align*}
is a nontrivial $\s$-twisted cyclic 2-cocycle, \ie $[\tau]\in HC^2_\s(\Oc(S_q^2))$.

On the other hand, we recall the cup product construction defined in \cite[Thm. 3.3]{RangSutl-IV}. Let $\Hc$ be a Hopf algebra, ${\Kc}\subseteq {\Hc}$ a cocommutative Hopf subalgebra, and finally $V$ and $N$ are SAYD modules over ${\Kc}$ and ${\Hc}$ respectively. It is proved in \cite[Thm. 3.1]{RangSutl-IV} that
\begin{equation*}
C_{\Kc}({\Hc},V,N) := \bigoplus_{p\geq 0}C^p_{\Kc}({\Hc},V,N),\qquad C^p_{\Kc}({\Hc},V,N):= \Hom_{\Kc}(V,\,N\ot_{\Hc} {\Hc}^{\ot\,p+1})
\end{equation*}
is a cocyclic module, computing the equivariant Hopf-cyclic cohomology $HC_{\Kc}({\Hc},V,N)$, via
\begin{align*}
  d_i(\phi)(v) = &
  \begin{cases}
     \part_i(\phi(v)), & \text{ if } 0\le i\le p,\\
     \part_{p+1}(\phi(v\ns{0}))\lt S(v\ns{-1}), & \text{ if } i=p+1,
  \end{cases}\\
s_j(\phi)(v)   = & 
     \s_j(\phi(v)), \quad \text{ for } \quad 0\le j\le p-1,\\
t_p(\phi)(v)   = & \tau_p(\phi(v\ns{0}))\lt S(v\ns{-1}),
\end{align*}
where the morphisms  $\p_i,\s_j,$ and $\tau$ are those given by \eqref{aux-coface-C}, \eqref{aux-codeg-C} and \eqref{aux-cyclic-C}, and 
\begin{equation*}
(n \ot_\Hc h^0\odots h^p)\lt u = n \ot_\Hc h^0\odots h^pu.
\end{equation*}
We recall also that $\phi \in C^p_{\Kc}({\Hc},V,N)$ if
\begin{equation}\label{aux-equivariancy}
\phi(v\cdot u) = \phi(v)\cdot u,
\end{equation}
for any $u\in \Kc$ where 
\begin{equation*}
(n\ot_\Hc h^0\odots h^p)\cdot u := n\ot_\Hc h^0u\ps{1}\odots h^pu\ps{p+1}.
\end{equation*}
Employing the notation $\phi(v) =: \phi(v)\snb{-1}\ot_{\Hc}  \phi(v)\snb{0} \odots \phi(v)\snb{p}$ for $\phi \in C^p_{\Kc}( {\Hc},V,N)$, 
let us set $\Psi: C^p_{\Kc}( {\Hc},V,N)\ot C^p_{\Hc}({\Ac},N)\longrightarrow  C^p_{\Kc}({\Ac},V)$ as
  \begin{align*}
\Psi(\phi\ot\psi)(v\ot x_0\odots x_p)= \psi\big( \phi(v)\snb{-1}\ot  \phi(v)\snb{0}(x_0)\ot \phi(v)\snb{1}(x_1) \odots \phi(v)\snb{p}(x_p)\big).
  \end{align*}
Then the equivariant characteristic map is given by the cup product
\begin{equation}\label{aux-equiv-cup}
HC^p_{\Kc}( {\Hc}, V,N)\ot HC^q_{\Hc}({\Ac},N)\ra HC^{p+q}_{\Kc}({\Ac},V), \qquad [\phi]\cup[\vp]:=\Psi({\rm Sh}(\phi\ot \vp)).
\end{equation}
using the shuffle map ${\rm Sh}:{\rm Tot}\lra {\rm Diag}$, from the total of the tensor product of the complexes $C^\ast_{\Kc}( {\Hc}, V,N)$ and $C^\ast_{\Hc}({\Ac},N)$ to the diagonal. Adopting the notation of \cite{KhalRang04}, the shuffle map is given by
\begin{equation*}
{\rm Sh}:{\rm Tot}^n\lra {\rm Diag}^n,\qquad {\rm Sh} = \sum_{p+q=n}\nb_{p,q},
\end{equation*}
where
\begin{equation*}
\nb_{p,q} = \sum_{\mu\in {\rm Sh}_{q,p}}(-1)^\mu d_{\wbar{\mu}(p+q)}\ldots d_{\wbar{\mu}(p+1)}\p_{\wbar{\mu}(p)}\ldots \p_{\wbar{\mu}(1)}.
\end{equation*}

\subsection{The $q$-index cocycle for the standard Podleś sphere}
Let us take $\Hc=U_q(su_2)$, $\Kc=k[\s,\s^{-1}]$, $N={}^{\s^{-1}}k$, $V={}^{\s}k$ and $\Ac=\Oc(S_q^2)$. On the next proposition we compute the $q$-index cocycle in the equivariant Hopf-cyclic cohomology.

\begin{proposition}
Let $F\in C^2_\Kc(\Hc,V,N)$ be given by 
\begin{equation}\label{index-cocycle}
F(\one):=\one\ot_\Hc 1\ot (KF\ot EK^3- EK\ot K^3F) - (q^3-q)^{-1}\one\ot_\Hc 1\ot 1 \ot K^4.
\end{equation}
Then, $[F]\in HC^2_\Kc(\Hc,V,N)$, \ie $F$ is an equivariant cyclic 2-cocycle.
\end{proposition}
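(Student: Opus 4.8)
The plan is to verify directly, from the cocyclic structure \eqref{aux-coface-C}, \eqref{aux-codeg-C}, \eqref{aux-cyclic-C} of $C^\ast_\Kc(\Hc,V,N)$ together with the antipode twists carried by the equivariant operators $d_{p+1}$ and $t_p$, that $F$ is (a) a well-defined $\Kc$-equivariant cochain, (b) a Hochschild cocycle $\sum_{i=0}^{3}(-1)^id_iF=0$, and (c) cyclic. It is convenient to split $F=F_0+F_1$ with $F_0:=\one\ot_\Hc 1\ot(KF\ot EK^3-EK\ot K^3F)$ and $F_1:=-(q^3-q)^{-1}\,\one\ot_\Hc 1\ot 1\ot K^4$. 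The two standing tools are the balancing identity over $\Hc$, namely $\one\ot_\Hc h^0\odots h^p=\one\ot_\Hc (K^jh^0)\odots(K^jh^p)$ for every $j\in\Zb$ --- valid because $\one\in N={}^{\s^{-1}}k$ carries the trivial right $\Hc$-action while each group-like $K^j$ acts diagonally on the legs --- and the relations $KEK^{-1}=qE$, $KFK^{-1}=q^{-1}F$ of $U_q(su_2)$.

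For (a): since $\s$ acts on $V={}^{\s}k$ through the counit, condition \eqref{aux-equivariancy} asserts precisely that $F(\one)$ is fixed under right multiplication of all of its tensor legs by $\s=K^{-2}$; using the balancing identity this is the same as invariance of each monomial of $F(\one)$ under simultaneous conjugation of all legs by a power of $K$. For $F_1$ this is clear, every leg being a power of $K$. For $F_0$ it follows from $KEK^{-1}=qE$ and $KFK^{-1}=q^{-1}F$: in each of $1\ot KF\ot EK^3$ and $1\ot EK\ot K^3F$ the total $E$-degree equals the total $F$-degree, so conjugation by $K^m$ rescales the monomial by $q^{\,m(\deg_E-\deg_F)}=1$. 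Hence $F\in C^2_\Kc(\Hc,V,N)$.

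For (b): the cofaces $d_0,d_1,d_2$ comultiply one leg, via $\D(KF)=KF\ot K^2+1\ot KF$, $\D(EK)=EK\ot K^2+1\ot EK$, $\D(EK^3)=EK^3\ot K^4+K^2\ot EK^3$, $\D(K^3F)=K^3F\ot K^4+K^2\ot K^3F$ and $\D(K^4)=K^4\ot K^4$; the coface $d_3$ applies the coalgebra twist and then right-multiplies the last leg by $S(\s)=\s^{-1}$, and since the first leg of $F$ is always $1$ this simply appends a power of $\s$ as a new last leg. A short computation then shows that in $d_0F-d_1F+d_2F-d_3F$ every term is cancelled by an opposite one; in fact $bF_0=0$ and $bF_1=0$ separately.

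Step (c) is the main obstacle and the reason the summand $F_1$ is included. One computes the image of $F$ under the twisted cyclic operator $t_2$ of $C^\ast_\Kc(\Hc,V,N)$ and shows that $F$ determines a class in \emph{cyclic} cohomology, not merely in Hochschild cohomology: $F_0$ alone is $b$-closed but not cyclic, and $F_1$ remedies this. Its coefficient is forced, and is best understood through $q^3-q=q^2(q-q^{-1})$: $(q^3-q)^{-1}$ is, up to the visible $q^{-2}$, the reciprocal of the structure constant in $EF-FE=(K^2-K^{-2})/(q-q^{-1})$, which is exactly the relation used to put the words produced by $t_2$ back in Poincaré-Birkhoff-Witt form. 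With $F_1$ present the anomalous contributions of $F_0$ cancel and $F$ becomes cyclic, so $[F]\in HC^2_\Kc(\Hc,V,N)$. This last verification, which is essentially $q$-power bookkeeping, is where care is most needed; arranging it so that $(q^3-q)^{-1}$ emerges as the unique admissible scalar is the cleanest way to present the argument.
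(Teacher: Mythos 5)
Your proposal is correct and follows essentially the same route as the paper: a direct check that $F$ is $\Kc$-equivariant (via the balancing over $\Hc$ and the fact that each monomial has equal total $E$- and $F$-degree), that $bF_0=bF_1=0$ termwise, and that $t_2F=F$. The cyclicity step is only sketched in your write-up, but the mechanism you identify --- applying $t_2$ swaps the two words of $F_0$ up to a $q^{-2}(EF-FE)K^2\ot K^4$ correction, which the relation $EF-FE=(K^2-K^{-2})/(q-q^{-1})$ turns into $(q^3-q)^{-1}(K^4-1)\ot K^4$, cancelled against $t_2$ of the $K^4$-term --- is precisely the computation the paper carries out, so completing it is routine bookkeeping.
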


\begin{proof}
Let us first show that $F$ is indeed $\Kc$-equivariant. For any $K^m$ with $m\in \Zb$ we have
\begin{align*}
F(\one)\cdot K^m = & 
\one\ot_\Hc K^m\ot (KFK^m\ot EK^{3+m} - EK^{m+1}\ot K^3FK^m)\\
  & - (q^3-q)^{-1}\one\ot_\Hc K^m\ot K^m \ot K^{4+m} \\
= & \one\cdot K^m\ot_\Hc 1\ot (KF\ot EK^3- EK\ot K^3F) \\
  & - (q^3-q)^{-1}\one\ot_\Hc 1\ot 1 \ot K^4 = F(\one\cdot K^m).
\end{align*}
Let us next show that $F$ is a Hochschild 2-cocycle. To this end we note that
\begin{align*}
b(\one\ot_\Hc 1\ot EK\ot K^3F) = & 
 \one\ot_\Hc 1 \ot 1\ot EK\ot K^3FK^2 \\
 & - \one\ot_\Hc 1\ot (1\ot  EK + EK\ot K^2)\ot K^3FK^2 \\
 & + \one\ot_\Hc 1\ot EK\ot (K^2\ot K^3FK^2 + K^3F\ot K^2K^2)\\
 & - \one\ot_\Hc 1\ot EK\ot K^3F \ot K^2K^2\\
 = & 0,
\end{align*}
and similarly that $b(\one\ot_\Hc 1\ot KF\ot EK^3) = 0$.  As a result,
$b(F)=0$. We next observe that
\begin{align*}
t(\one\ot_\Hc 1\ot EK\ot K^3F)
 = & \one\ot_\Hc EK\ot K^3F \ot K^2K^2 \\
 = & \one\ot_\Hc E\ot K^3FK^{-1} \ot K^3 \\
 = & -q^{-2}\one\ot_\Hc 1\ot EFK^2 \ot K^4 - \one\ot_\Hc 1\ot KF \ot EK^3,
\end{align*}
where we used \eqref{aux-equivariancy} in the second equality, and that
\begin{align*}
t(\one\ot_\Hc 1\ot KF\ot EK^3)
  = & \one\ot_\Hc KF\ot EK^3\ot K^4 \\
 = & -q^{-2}\one\ot_\Hc 1\ot FEK^2\ot K^4 - \one\ot_\Hc 1\ot EK\ot K^3F.
\end{align*}
As a result,
\begin{align*}
 t(\one\ot_\Hc 1\ot (KF\ot EK^3- EK\ot K^3F))
 = & \one\ot_\Hc 1\ot (KF\ot EK^3- EK\ot K^3F) \\
   & + q^{-2}\one\ot_\Hc 1\ot (EF-FE)K^2 \ot K^4 \\
 = & \one\ot_\Hc 1\ot (KF\ot EK^3- EK\ot K^3F) \\
   & + (q^3-q)^{-1}\one\ot_\Hc 1\ot K^4 \ot K^4 \\
   & - (q^3-q)^{-1}\one\ot_\Hc 1\ot 1 \ot K^4.
\end{align*}
On the other hand, $b(\one\ot_\Hc 1\ot 1 \ot K^4) = 0$, and
$t(\one\ot_\Hc 1\ot 1 \ot K^4) = \one\ot_\Hc 1\ot K^4 \ot K^4$.
Hence, we have
\begin{align*}
b(\one\ot_\Hc 1\ot (KF\ot EK^3- EK\ot K^3F) - (q^3-q)^{-1}\one\ot_\Hc 1\ot 1 \ot K^4) = 0
\end{align*}
and
\begin{align*}
t(\one\ot_\Hc 1\ot & (KF\ot EK^3- EK\ot K^3F) - (q^3-q)^{-1}\one\ot_\Hc 1\ot 1 \ot K^4)\\
= & \one\ot_\Hc 1\ot (KF\ot EK^3- EK\ot K^3F) - (q^3-q)^{-1}\one\ot_\Hc 1\ot 1 \ot K^4.
\end{align*}
We thus conclude that
\begin{equation*}
F = \one\ot_\Hc 1\ot (KF\ot EK^3- EK\ot K^3F) - (q^3-q)^{-1}\one\ot_\Hc 1\ot 1 \ot K^4 \in C^2_\Kc(\Hc,V,N)
\end{equation*}
is an equivariant cyclic 2-cocycle.
\end{proof}

Now, using the equivariant cup product \eqref{aux-equiv-cup} we obtain the following version of the Schmüdgen-Wagner 2-cocycle \cite{SchmWagn04}, see also \cite{Hadf07}.

\begin{corollary}
There is a nontrivial $\s^{-1}$-twisted cyclic 2-cocycle $\tau$ on $\Oc(S_q^2)$ such that
\begin{equation}\label{Schmudgen-Wagner-index}
\tau(x_0,x_1,x_2) = h(x_0KF(x_1)EK^3(x_2)) - h(x_0EK(x_1)K^3F(x_2)) - (q^3-q)^{-1}h(x_0x_1K^4(x_2)).
\end{equation}
\end{corollary}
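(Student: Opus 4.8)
The plan is to obtain the $\s^{-1}$-twisted cyclic $2$-cocycle $\tau$ on $\Oc(S_q^2)$ as the image of the equivariant cyclic $2$-cocycle $F$ from the previous proposition under the equivariant cup product \eqref{aux-equiv-cup}, specialized to $\Hc=U_q(su_2)$, $\Kc=k[\s,\s^{-1}]$, $N={}^{\s^{-1}}k$, $V={}^\s k$, $\Ac=\Oc(S_q^2)$. Recall from the text that with $\Hc=k[\s,\s^{-1}]$, $N={}^{\s^{-1}}k$, the Hopf-cyclic cohomology $HC^\ast_\Hc(\Ac,N)$ of a module algebra $\Ac$ reduces to the $\s^{-1}$-twisted cyclic cohomology $HC^\ast_{\s^{-1}}(\Ac)$, and correspondingly $C^0_\Hc(\Ac,N)$ contains the $\s^{-1}$-twisted trace given by the Haar functional $h$ on $\Oc(S_q^2)$; the modularity $h(xy)=h(\s(y)x)$ with $\s=K^{-2}$ recorded in Subsection on the standard Podle\'s sphere is precisely the statement that $h\in HC^0_{\s^{-1}}(\Oc(S_q^2))$. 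So I would take $\vp = h \in HC^0_\Hc(\Ac,N)$ and form $[F]\cup[h]\in HC^2_\Kc(\Ac,V)$, which by the remark in Subsection on the module-algebra Hopf-cyclic cohomology (with $\Kc=k[\s,\s^{-1}]$, $V={}^\s k$) is again a class in $HC^2_{\s^{-1}}(\Oc(S_q^2))$.

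The computation then amounts to unwinding the definition of $\Psi$ composed with the shuffle map on the bidegree $(2,0)$ piece. Since $\vp=h$ sits in degree $0$, the shuffle $\mathrm{Sh}(\phi\ot\vp)$ in total degree $2$ reduces to the single term $\nabla_{2,0}$, and $\Psi$ applied to $F(\one)$ against $(x_0,x_1,x_2)$ simply evaluates $h$ on
$\phi(\one)\snb{-1}\ot \phi(\one)\snb{0}(x_0)\ot\phi(\one)\snb{1}(x_1)\ot\phi(\one)\snb{2}(x_2)$,
i.e. one substitutes the three slots of each summand of $F(\one)$ as coregular operators on $x_0,x_1,x_2$ and then applies $h$ (absorbing the $\ot_\Hc$ using that $N$ is one-dimensional and the leading $1\in\Hc$ acts as identity). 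For the first summand $\one\ot_\Hc 1\ot(KF\ot EK^3-EK\ot K^3F)$ this yields $h\big(x_0\,KF(x_1)\,EK^3(x_2)\big)-h\big(x_0\,EK(x_1)\,K^3F(x_2)\big)$, and for the normalization term $-(q^3-q)^{-1}\one\ot_\Hc 1\ot1\ot K^4$ it yields $-(q^3-q)^{-1}h\big(x_0\,x_1\,K^4(x_2)\big)$, matching \eqref{Schmudgen-Wagner-index} exactly. That $\tau$ is an $\s^{-1}$-twisted cyclic $2$-cocycle is then automatic: the cup product \eqref{aux-equiv-cup} lands in $HC^2_\Kc(\Ac,V)=HC^2_{\s^{-1}}(\Oc(S_q^2))$ at the level of cohomology, so representing cocycles go to cocycles.

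For nontriviality, I would compare the cohomology class $[\tau]$ with the known nonzero $\s$-twisted cyclic $2$-cocycle exhibited just before the statement, namely $\tau'(x,y,z)=h\big(xR_F(y)R_E(z)-q^2xR_E(y)R_F(z)\big)$, for which $[\tau']\in HC^2_\s(\Oc(S_q^2))$ is known to be nonzero by \cite{SchmWagn04}. Concretely I would argue that $\tau$ and the Schm\"udgen--Wagner cocycle (up to the explicit $\s^{-1}$-twist and a nonzero scalar) represent the same class, either by identifying $R_E,R_F$ with the coregular actions of $E,F$ up to powers of $K$ and matching the two expressions after using the relations of $\Oc(S_q^2)$ and the modularity of $h$, or — more cheaply — by evaluating $\tau$ on an explicit cycle in $\Oc(S_q^2)^{\ot 3}$ (in the spirit of the element $\om$ used in the proof of Proposition~\ref{MNW1}) on which all twisted coboundaries vanish and checking the value is nonzero using the explicit formula \eqref{aux-Haar-functional-SL2} for $h$.

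The main obstacle I anticipate is the bookkeeping in the second step: carefully tracking the identifications $C^p_\Kc(\Hc,V,N)\cong(\text{something concrete})$ through the $\ot_\Hc$ and the one-dimensional SAYD modules ${}^\s k$, ${}^{\s^{-1}}k$, and making sure the leading tensor factor $1\in\Hc$ in $F(\one)$ and the way it pairs against $x_0$ produces exactly $x_0$ (and not $x_0$ acted on by something) — in other words, verifying that the abstract cup product formula really does specialize to the clean formula \eqref{Schmudgen-Wagner-index}. Once the dictionary between the equivariant and twisted pictures is pinned down, the rest is a direct substitution, and the nontriviality follows from the already-established nonvanishing of the Schm\"udgen--Wagner class.
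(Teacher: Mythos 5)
Your construction is essentially the paper's: the cocycle \eqref{Schmudgen-Wagner-index} is obtained there exactly as the cup product \eqref{aux-equiv-cup} of the equivariant $2$-cocycle \eqref{index-cocycle} with the Hopf-cyclic $0$-cocycle $[h]\in HC^0_{U_q(su_2)}(\Oc(S_q^2),{}^{\s^{-1}}k)$, and since the second factor has degree $0$ the shuffle/cup formula does collapse to the direct substitution you describe. One imprecision to fix: the second factor of the cup product must live in $HC^0_{\Hc}(\Ac,N)$ with $\Hc=U_q(su_2)$ and $N={}^{\s^{-1}}k$, so you need $h$ to be a $U_q(su_2)$-equivariant $0$-cocycle with these coefficients (which uses the invariance of the Haar state under the coregular action, as in the $SL_q(2)$ section), not merely a $\s^{-1}$-twisted trace over $\Kc=k[\s,\s^{-1}]$; identifying $HC^0_{\s^{-1}}(\Oc(S_q^2))$ with $HC^0_{U_q(su_2)}(\Oc(S_q^2),{}^{\s^{-1}}k)$ skips that verification.

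For nontriviality, your second (``cheaper'') route is the one the paper takes, but the whole content of that step is the explicit cycle, which your proposal leaves unspecified: following \cite{SchmWagn04} the paper evaluates $\tau$ on $\eta' = q^4B^\ast\ot A\ot B + q^2B\ot B^\ast\ot A + q^2A\ot B\ot B^\ast - q^2B^\ast\ot B\ot A - q^2A\ot B^\ast\ot B - B\ot A\ot B^\ast + (q^6-q^2)A\ot A\ot A$, for which $b\tau'(\eta')=(q^4-q^2)\tau'(A\ot A)=0$ for every $\s^{-1}$-twisted cyclic $1$-cocycle $\tau'$, while $\tau(\eta')\neq 0$; producing such an element is the nonroutine part of the argument. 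Your first alternative, identifying $[\tau]$ with the class of $h\big(xR_F(y)R_E(z)-q^2xR_E(y)R_F(z)\big)$, does not work as stated: that cocycle is $\s$-twisted whereas $\tau$ is $\s^{-1}$-twisted (with $\s=K^{-2}$), so the two live in different twisted cyclic complexes and cannot represent ``the same class'' without a further comparison map; also, the explicit formula \eqref{aux-Haar-functional-SL2} you invoke is for the Haar functional of $\Oc(SL_q(2))$, and the evaluation here must be done with the Haar state of $\Oc(SU_q(2))$ restricted to $\Oc(S_q^2)$.
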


\begin{proof}
We obtain the cocycle \eqref{Schmudgen-Wagner-index} by the cup product \eqref{aux-equiv-cup} of the Hopf-cyclic 0-cocycle $[h] \in HC^0_{U_q(su_2)}(\Oc(S_q^2),{}^{\s^{-1}}k)$ with the equivariant 2-cocycle \eqref{index-cocycle}.

We next show that it is nontrivial. Following \cite{SchmWagn04}, we consider the element
\begin{align*}
\eta'
 = & q^4B^\ast\ot A \ot B + q^2B\ot B^\ast\ot A + q^2A\ot B\ot B^\ast \\
   & - q^{2}B^\ast\ot B\ot A - q^{2}A\ot B^\ast\ot B 
     - B\ot A\ot B^\ast + (q^6-q^{2})A\ot A\ot A
\end{align*}
on which any Hochschild coboundary of a $\s^{-1}$-twisted cyclic 1-cocycle vanishes. Indeed, for any $\s^{-1}$-twisted cyclic 1-cocycle $\tau'\in HC^1_{\s^{-1}}(\Oc(S_q^2))$,
\begin{equation*}
b\tau'(\eta') = (q^4-q^2)\tau'(A\ot A) = 0.
\end{equation*}
On the other hand, a quick computation yields $\tau(\eta')\neq 0$.
\end{proof}

\bibliographystyle{plain}
\bibliography{references}{}

\end{document}